\newtheorem{thm}{Theorem}[section]
\newtheorem{lem}[thm]{Lemma}
\newtheorem{prop}[thm]{Proposition}
\newtheorem{cond}[thm]{Condition}
\newtheorem{rem}[thm]{Remark}
\numberwithin{equation}{section}
\newtheorem{exam}[thm]{Example}
\newtheorem{assum}[thm]{Assumptions}
\newcommand{\Hd}{\mathcal{H}}
\newcommand{\E}{\mathbb{E}}
\newcommand{\SJ}{\mathcal{SJD}}
\newcommand{\Vd}{\mathcal{V}}
\newcommand{\psit}{\phi_{{\tiny T}}}
\newcommand{\psith}{\psi^{(\tiny T)}_h}
\newcommand{\tn}{t_n}
\newcommand{\bbR}{\mathbb R}
\newcommand{\mcal}{\mathcal{M}}
\let\phi=\varphi
\newcommand{\R}{{\mathbb R}}
\begin{document}

\title[Optimal tuning of the Hybrid Monte-Carlo Algorithm]{Optimal Tuning of the Hybrid Monte-Carlo Algorithm}%

\author{A. Beskos}
\address{Department of Statistical Science, UCL,
Gower Street, London, WC1E 6BT, UK}%
\email{alex@stats.ucl.ac.uk}%

\author{N.S. Pillai}
\address{CRiSM,
Department of Statistics, University of Warwick, Coventry, CV4 7AL, UK}%
\email{n.pillai@warwick.ac.uk}

\author{G.O.Roberts}
\address{Department of Statistics, University of Warwick, Coventry, CV4 7AL, UK}%
\email{gareth.o.roberts@warwick.ac.uk}%

\author{J. M. Sanz-Serna}
\address{Departamento de Matematica Aplicada,
Facultad de Ciencias, Universidad de Valladolid, Spain }
\email{sanzsern@mac.uva.es}

\author{A.M.Stuart}
\address{Mathematics Institute, University of Warwick, Coventry, CV4 7AL, UK}%
\email{a.m.stuart@warwick.ac.uk}%


\begin{abstract}
We investigate the properties of the Hybrid Monte-Carlo algorithm
(HMC) in high dimensions.  HMC develops a Markov chain reversible
w.r.t.\@ a given target distribution $\Pi$ by using separable 
Hamiltonian dynamics
with potential
$-\log\Pi$. The additional momentum variables are chosen
at random from the Boltzmann distribution and the
continuous-time Hamiltonian dynamics
are then discretised using the leapfrog scheme. The
induced bias is  removed via a Metropolis-Hastings accept/reject
rule. In the simplified scenario of independent, identically
distributed components, we  prove that, to obtain an
$\mathcal{O}(1)$ acceptance probability as the dimension $d$ of
the state space tends to $\infty$, the leapfrog step-size $h$
should be scaled as $h= l \times d^{-1/4}$. Therefore, in high
dimensions, HMC requires $\mathcal{O}(d^{1/4})$ steps to traverse
the state space. We also identify analytically the asymptotically
optimal acceptance probability, which turns out to be  $0.651$
(to three decimal places). This is the choice which optimally
balances the cost of generating a proposal, 
which {\em decreases}
as $l$ increases, against the cost related to the 
average number of proposals required to obtain acceptance,
which {\em increases} as $l$ increases. 
\end{abstract}
\maketitle

\section{Introduction}
The Hybrid Monte Carlo (HMC) algorithm originates from the physics
literature \cite{duna:87} where it was introduced as a fast method
for simulating molecular dynamics. It
has since become  popular  in a number of application areas
including statistical physics \cite{gupt:88,gupt:90,sext:92,hase:01,Akhm:09}, computational chemistry \cite{hans:96,moha:09,Schu:98,tuck:93}, 
data assimilation \cite{alex:etal:05},
geophysics \cite{moha:09} and neural networks 
\cite{neal:96, zloc:01}. The algorithm has also
been proposed as a generic tool for Bayesian statistical 
inference \cite{neal:93,isba:00,giro:09}. 

HMC has been proposed as a method to improve on traditional 
Markov Chain Monte Carlo (MCMC) algorithms. There are heuristic arguments to suggest 
why HMC might perform better, for example based on the idea 
that it breaks down {\em random walk-like} behaviour intrinsic 
to many MCMC algorithms such as Random-Walk Metropolis (RWM) algorithm. 
However there is 
very little theoretical understanding of this phenomenon  (though 
see \cite{Diac:Holm:Neal:00}).
This lack of theoretical guidance of choosing the 
free parameters for the algorithm partly accounts for its relative
obscurity in  statistical applications. The aim of this paper
is to provide insight into 
the behavior of HMC in high dimensions and develop
theoretical tools for improving the efficiency of the algorithm.

HMC uses the derivative of the target probability log-density to
guide the Monte-Carlo trajectory towards areas of high
probability. The standard  RWM algorithm
\cite{metr:53} proposes \emph{local}, symmetric moves around the
current position. In many cases (especially in high dimensions)
the variance of the proposal must be small for the corresponding
acceptance probability to be satisfactory. However smaller
proposal variance leads to higher autocorrelations, and large
computing time to explore the state space. In contrast, and as
discussed in the following sections, HMC exploits the information
on the derivative of the log density
to deliver guided, \emph{global} moves, with
higher acceptance probability.

HMC is closely related to the so-called Metropolis-adjusted
Langevin algorithm (MALA) \cite{robe:96}
which uses the derivative of the log-density  to
propose steepest-ascent moves in the state space. MALA employs
\emph{Langevin} dynamics; the proposal is derived from an Euler
discretisation of a Langevin stochastic differential equation that
leaves the target density invariant. On the other hand, HMC uses
\emph{Hamiltonian} dynamics. The original variable $q$ is seen as
a `location' variable and an auxiliary `momentum' variable $p$ is
introduced; Hamilton's ordinary differential equations are used to
generate moves in the enlarged $(q,p)$ phase space. These moves
preserve the total energy, a fact that implies, in probability
terms, that they preserve the target density $\Pi$ of the original
$q$ variable, provided that the initial momentum is chosen
randomly from an appropriate Gaussian distribution.
Although seemingly of different origin, MALA can be
thought of as a `localised' version of HMC: we will return to this
point in the main text.

In practice, continuous-time Hamiltonian dynamics are  discretised
by means of a numerical scheme; the popular
\emph{St\"ormer-Verlet} or {\em leapfrog} scheme 
\cite{Hair:etal:06,Leim:Reic:04,Sanz:Calv:94,skee:99} 
is  currently the scheme of choice. This
integrator  does not conserve energy exactly and the induced bias
is corrected via a Metropolis-Hastings accept/reject rule. In this
way, HMC develops a Markov chain reversible w.r.t.\@ $\Pi$, whose
transitions incorporate information on $\Pi$ in a natural way.

In this paper we  will investigate the properties of HMC in high
dimensions and, in such a context, offer some guidance over the
\emph{optimal} specification of the free parameters of the
algorithm. We assume that we wish to sample from a density $\Pi$
on $\R^{N}$ with
\begin{equation}
\label{eq:pi}
\Pi(Q) = \exp\bigl(-\Vd(Q)\bigr) \ ,
\end{equation}
for $\Vd:\bbR^{N} \rightarrow \bbR$. We study the simplified
scenario where $\Pi(Q)$ consists of $d\gg 1$ independent
identically distributed (iid) vector  components,
\begin{equation}
\label{eq:iid} \Pi(Q) = \exp\bigl( -\sum_{i=1}^{d} V(q_i)\bigr)\ , 
\quad V: \bbR^m \rightarrow \bbR\ ; \quad N=m\times d \ .
\end{equation}
For the leapfrog integrator, we show analytically that, under
suitable hypotheses on $V$ and as $d\rightarrow \infty$,  HMC
requires $\mathcal{O}(d^{1/4})$ steps to traverse the state 
space, and furthermore, identify the associated optimal 
acceptance probability. 

To be more precise, if $h$ is
the step-size employed in the leapfrog integrator, then
we show that the choice 
\begin{equation}
\textrm{HMC}:\quad h =l\cdot d^{-1/4}
\label{eq:scale1}
\end{equation}
leads to an average acceptance probability 
which is of $\mathcal{O}(1)$
as $d \to \infty:$ Theorem \ref{thm:lim}.
This implies that $\mathcal{O}(d^{1/4})$ steps
are required for HMC to make $\mathcal{O}(1)$ moves in state
space.
Furthermore we provide a result of perhaps greater practical
relevance. We prove that, for the leapfrog integrator
and as $d\rightarrow \infty$, the asymptotically \emph{optimal}
algorithm corresponds to a well-defined value of the acceptance probability,
{\em independent of the particular target} $\Pi$ in
(\ref{eq:iid}). This value is (to three decimal places) 
$0.651$: Theorems \ref{cor:opt} and \ref{thm:sjd}.
Thus, when applying HMC in high
dimensions, one should try to tune the free algorithmic parameters
to obtain an acceptance probability close to that value.
We give the precise definition of optimality when stating
the theorems but, roughly, it is determined by the choice of 
$l$ which 
balances the cost of generating a proposal, 
which {\em decreases}
as $l$ increases, against the cost related to the 
average number of proposals required to obtain acceptance,
which {\em increases} as $l$ increases.

The scaling $\mathcal{O}(d^{1/4})$ to make
$\mathcal{O}(1)$ moves in state space contrasts
favorably with the corresponding scalings $\mathcal{O}(d)$ and
$\mathcal{O}(d^{1/3})$ required in a similar context by RWM and
MALA respectively (see the discussion below). Furthermore,
the full analysis provided in this paper for
the leapfrog scheme may be easily extended to high-order,
volume-preserving, reversible integrators. For such an integrator
the corresponding scaling would be $\mathcal{O}(d^{1/(2\nu)})$,
where $\nu$ (an even integer) represents the order of the method.
For the standard HMC algorithm, previous works have already 
established the relevance of the choice 
$h=\mathcal{O}(d^{-1/4})$ (by heuristic arguments,
see \cite{gupt:90}) and an optimal
acceptance probability of around $0.7$ (by numerical
experiments, see \cite{isba:00}). 
Our analytic study of the scaling issues
in HMC was prompted by these two papers.

The paper is organized as follows. Section \ref{sec:define}
presents the HMC method and reviews the literature concerning
scaling issues  for the  RWM and MALA algorithms.
Section~\ref{sec:limit} studies the asymptotic behaviour of HMC as
the dimensionality grows, $d\rightarrow\infty$, including
the key Theorem \ref{thm:lim}. The optimal tuning
of HMC is discussed in Section \ref{sec:optimal},
including the key Theorems \ref{cor:opt} and
\ref{thm:sjd}. Sections
\ref{sec:energyver} and \ref{sec:proofs} are technical. The 
first of them contains the derivation of the required 
numerical analysis estimates on the leapfrog integrator,
with careful attention paid to the dependence of constants
in error estimates on the initial condition; estimates of 
this kind are not available in the
literature and may be of independent interest. Section
\ref{sec:proofs} gathers the probabilistic proofs. We finish with
some conclusions and discussion in Section \ref{sec:conclusion}.

\section{Hybrid Monte Carlo (HMC)}
\label{sec:define}
\subsection{Hamiltonian dynamics}
Consider the Hamiltonian function:
\begin{equation*}
\Hd(Q,P)=\frac12 \langle P, \mcal^{-1} P\rangle+\Vd(Q) \ ,
\end{equation*}
on $\R^{2N}$, where $\mcal$ is a symmetric positive definite
matrix (the `mass' matrix). One should think of $Q$ as the
\emph{location} argument and $\Vd(Q)$ as the potential energy of
the system; $P$ as the \emph{momenta}, and $(1/2) \langle P,
\mcal^{-1} P\rangle$ as the kinetic energy. Thus $\Hd(Q,P)$ gives
the total \emph{energy}: the sum of the potential and the kinetic
energy.
The Hamiltonian dynamics associated with $\Hd$ are governed by
\begin{equation}
\label{eq:ham1} \frac{dQ}{dt}=\mcal^{-1}P,\quad
\frac{dP}{dt}=-\nabla \mathcal{V}(Q)\ ,
\end{equation}
a system of ordinary differential equations  whose solution flow
$\Phi_t$ defined by
\begin{equation*}
(Q(t), P(t)) = \Phi_t(Q(0), P(0))
\end{equation*}
possesses some key properties relevant to HMC:\\
\begin{itemize}
\item {\bf 1. Conservation of Energy:} The change in the potential becomes kinetic energy;
\textit{i.e.,} $\mathcal{H} \circ \Phi_t = \mathcal{H}$, for all $t>0$, or $\mathcal{H}(\Phi_t(Q(0),P(0))) = \mathcal{H}(Q(0),P(0))$, for all $t>0$ and
all initial conditions $(Q(0),P(0))$.\\
\item{\bf 2. Conservation of Volume:} The volume element $dP\,dQ$ of
the phase space
is conserved under the mapping $\Phi_t$.\\
\item{\bf 3. Time Reversibility:} If $\mathcal{S}$ denotes the symmetry
operator:
\begin{align*}
\mathcal{S}(Q,P) = (Q,-P)
\end{align*}
then $\mathcal{H} \circ \mathcal{S} = \mathcal{H}$ and
\begin{align}
\label{eqn:sym}
\mathcal{S}\circ (\Phi_t)^{-1} \circ \mathcal{S} = \Phi_t \ .
\end{align}
Thus, changing the sign of the initial velocity, evolving
backwards in time, and changing the sign of the final velocity
reproduces the forward evolution.
\end{itemize}

From the Liouville equation for equation \eqref{eq:ham1}
it follows that,
if the initial conditions are distributed according
a probability measure with Lebesgue density depending
only on $\mathcal{H}(Q,P)$, then this probability measure is
preserved by the Hamiltonian flow $\Phi_t.$
In particular, if the initial conditions $(Q(0),P(0))$ of
\eqref{eq:ham1} are distributed with a density
(proportional to) $$\exp ( - \Hd(Q,P))= \exp((1/2) \langle P,
\mcal^{-1} P\rangle)\exp(-\mathcal{V}(Q)),$$ then, 
for all $t>0$, the marginal density of
$Q(t)$ will also be (proportional to) $\exp(-\mathcal{V}(Q))$.
This suggests that integration of equations \eqref{eq:ham1}
might form the basis for an exploration of the
target density $\exp(-\mathcal{V}(Q))$.

\subsection{The HMC algorithm}
To formulate a practical algorithm, the continuous-time dynamics (\ref{eq:ham1}) must
be discretised. 
The most popular \emph{explicit}
method is the St\"{o}rmer-Verlet or leapfrog scheme 
(see \cite{Hair:etal:06,Leim:Reic:04,Sanz:Calv:94} and the references therein)
 defined as follows. Assume a current state $(Q_0,P_0)$; then,
after one step of length  $h>0$ the
system \eqref{eq:ham1} will be at a state $(Q_h,P_h)$ defined by
the three-stage procedure:
\begin{subequations}
\label{leap-frog}
\begin{equation}
P_{h/2}=P_0-\tfrac{h}{2}\,\nabla\Vd(Q_{0})\ ;
\end{equation}
\begin{equation}
Q_{h}=Q_0+h\,\mcal^{-1}P_{h/2}\ ;
\end{equation}
\begin{equation}
P_{h}=P_{h/2}-\tfrac{h}{2}\,\nabla\Vd(Q_{h})\ .
\end{equation}
\end{subequations}
The scheme gives rise to a map:
\begin{equation*}
\Psi_{h}\colon(Q_0,P_0) \mapsto (Q_h,P_h)
\end{equation*}
which approximates the flow $\Phi_h$. The solution at time $T$ is
approximated by taking $\lfloor \frac{T}{h} \rfloor$ leapfrog
steps:
\begin{equation*}
(Q(T), P(T)) = \Phi_T((Q(0), P(0))\approx \Psi^{\lfloor
\frac{T}{h} \rfloor}_h((Q(0), P(0)) \ .
\end{equation*}
Note that this is  a  \emph{deterministic} computation.  The map
\begin{equation*}
\Psi^{(T)}_h := \Psi^{\lfloor \frac{T}{h} \rfloor}_h
\end{equation*}
may be shown to be volume preserving and time reversible
(see \cite{Hair:etal:06,Leim:Reic:04,Sanz:Calv:94}) 
but it does not exactly conserve  energy.
As a consequence the leapfrog algorithm does not share
the property of equations \eqref{eq:ham1} following
from the Liouville equation, namely
that any probability density function proportional to $\exp\bigl(-{\mathcal H}(Q,P)\bigr)$
is preserved. In order to restore this property an
accept-reject step must be added. Paper \cite{neal:93}
provides a clear derivation of the required acceptance
ceriterion.

We can now describe the complete HMC algorithm. Let the current state be~$Q$. The  next state
for the HMC Markov chain is determined by the dynamics described in Table \ref{tb:HMC}.

\begin{table}[!h]
\begin{flushleft}
\medskip
\hrule
\medskip
{\itshape HMC($Q$):}
{\itshape
\vspace{0.2cm}
\begin{enumerate}
\item[(i)] Sample a momentum $P\sim N(0,\mcal)$. \vspace{0.2cm}
\item[(ii)] Accept the proposed update $Q'$ defined via $(Q',P') =
\Psi^{(T)}_h(Q,P)$ w.p.: \vspace{0.2cm}
$$
a((Q,P),(Q',P')):= 1 \wedge \exp\{\Hd(Q,P)-\Hd(Q',P')\} \ .
$$
\end{enumerate} }
\medskip
\hrule
\medskip
\end{flushleft}
\caption{The Markov transition for the Hybrid Monte-Carlo
algorithm. Iterative application for a given starting location
$Q^{0}$, will yield a Markov chain $Q^{0},Q^{1},\ldots$}
\label{tb:HMC}
\end{table}

Due to the time reversibility and volume conservation properties
of the integrator map $\Psi^{(T)}_h$, the  recipe  in Table
\ref{tb:HMC} defines (see \cite{duna:87,neal:93}) a Markov chain
reversible w.r.t\@  $\Pi(Q)$; sampling this chain up to
equilibrium will provide correlated samples $Q^n$ from $\Pi(Q)$.
We note that the momentum $P$ is
merely an auxiliary variable and that the user of the algorithm is
free to choose $h$, $T$ and the mass matrix $\mcal$.
In this paper we concetrate on the optimal choice of
$h$, for high dimensional targets.

\subsection{Connection with other Metropolis-Hastings algorithms}
\label{sec:connect} Earlier research has studied the optimal
tuning of  other Metropolis-Hastings algorithms, namely the
Random-Walk Metropolis (RWM) and the Metropolis-adjusted
Lange\-vin algorithm (MALA). In contrast with HMC, whose proposals
involve a deterministic element, those algorithms use updates that
are purely stochastic. For the target density $\Pi(Q)$ in
(\ref{eq:pi}), RWM is specified through the proposed update
\begin{equation*}
Q' = Q + \sqrt{h}\,Z \ ,
\end{equation*}
with $Z\sim N(0,I)$ (this sample case suffices for our 
exposition, but note that $Z$ may be
allowed to have an arbitrary mean zero distribution), while MALA
is determined through the proposal
\begin{equation*}
Q' = Q + \frac{h}{2}\,\nabla\log\Pi(Q) + \sqrt{h}\,Z\ .
\end{equation*}
The density $\Pi$ is invariant for both algorithms when the
proposals are accepted with probability
\begin{equation*}
a(Q,Q') =  1 \wedge \frac{\Pi(Q')T(Q',Q)}{\Pi(Q)T(Q,Q')} \ ,
\end{equation*}
where
\begin{equation*}
T(x,y)=\mathrm{P}\,[\,Q'\in dy\mid Q=x\,]\,/\,dy
\end{equation*}
is the transition density of the proposed update (note that for
RWM the symmetry of the proposal implies $T(Q,Q') = T(Q',Q)$).

The proposal distribution for MALA corresponds to the Euler
discretization of the stochastic differential equation (SDE)
\begin{equation*}
dQ =  \frac{1}{2}\nabla\log\Pi(Q)\,dt + dW ,
\end{equation*}
for which $\Pi$ is an invariant density (here $W$ denotes a
standard Brownian motion). One can easily check that HMC and MALA
are connected because HMC reduces to MALA when $T\equiv h$,
\textit{i.e.,}\@ when the algorithm makes only a single leapfrog
step at each transition of the chain.

Assume now that RWL and MALA are applied with the scalings
\begin{equation}
\textrm{RWM}:\quad h = l\cdot d^{-1}, \qquad \textrm{MALA}:\quad h
= l\cdot d^{-1/3},
\label{eq:scale2}
\end{equation}
for some constant $l>0$, in the simplified scenario where the
target $\Pi$ has the iid structure (\ref{eq:iid}) with $m=1$. The
papers \cite{robe:97}, \cite{robe:98} prove that, as $d\rightarrow
\infty$ and under regularity conditions on $V$ (the function $V$
must be seven times differentiable\footnote{although this is a technical
requirement which may be relaxed}, with all derivatives having
polynomial growth bounds, and all moments of $\exp(-V)$ must be
finite), the acceptance probability approaches a nontrivial value:
\begin{equation*}
\mathbb{E}\,[\,a(Q,Q')\,]\rightarrow a(l) \in (0,1)
\end{equation*}
(the limit $a(l)$ is different for each of the two algorithms).
Furthermore,  if $q_1^{0},q_1^{1},\ldots$ denotes the projection
of the trajectory $Q^{0},Q^{1},\ldots$ onto its first coordinate,
in the above scenario it is possible to show (\cite{robe:97},
\cite{robe:98}) the convergence of the continuous-time
interpolation
\begin{equation}
\textrm{RWM}:\,\, t\mapsto q_1^{[\,t\cdot d\,]},\qquad \textrm{MALA}:\,\, t\mapsto q_1^{[\,t\cdot d^{1/3}\,]}
\label{eq:scale3}
\end{equation}
to the diffusion process governed by the SDE
\begin{equation}
\label{eq:limsde} dq = -\frac{1}{2}\,l\,a(l)\,V^{'}(q)\,dt +
\sqrt{l\,a(l)}\,dw,
\end{equation}
($w$ represents a standard Brownian motion). 
In view of \eqref{eq:scale2}, \eqref{eq:scale3} and
\eqref{eq:limsde} we deduce that the RWM and MALA
algorithms cost ${\mathcal O}(d)$ and ${\mathcal O}(d^{1/3})$
respectively to explore the invariant measure in
stationarity. Furthermore, as the product $l\,a(l)$
determines the \emph{speed} of the limiting
diffusion the state space  will be explored faster
for the choice $l_{opt}$ of $l$ that maximises $l\,a(l)$. While
$l_{opt}$  depends  on the target distribution, it turns out that
the optimal acceptance probability $a(l_{opt})$ is independent of
$V$. In fact, with three decimal places, one finds:
\begin{equation*}
\textrm{RWM}:\,\, a(l_{opt})=0.234,\qquad \textrm{MALA}:\,\, a(l_{opt})=0.574 \ .
\end{equation*}
Asymptotically as $d\rightarrow\infty$,
this analysis identifies   algorithms that may be
regarded
as \emph{uniformly} optimal, because, as discussed in
\cite{robe:01}, ergodic averages of trajectories corresponding to
$l=l_{opt}$ provide optimal estimation of expectations
$\mathbb{E}\,[\,f(q)\,]$, $q\sim \exp(-V)$, irrespectively of the
choice of the (regular) function $f$. These investigations of the
optimal tuning of RWL and MALA  have been subsequently extended in
\cite{Besk:etal:0901} and \cite{Besk:Stua:09} to non-product
target distributions.

For HMC we show that the scaling \eqref{eq:scale1}
leads to an average acceptance probability of ${\mathcal O}(1)$
and hence to a cost of ${\mathcal O}(d^{1/4})$
to make the ${\mathcal O}(1)$ moves
necessary to explore the invariant measure. However, in
constrast to RWM and MALA, we are not
able to provide a simple description of the limiting
dynamics of a single coordinate of the Markov chain.
Consequently optimality is harder to define.

\section{Hybrid Monte Carlo in the limit $d\rightarrow\infty$.}
\label{sec:limit}

The primary aim of this section is to prove 
Theorem \ref{thm:lim} concerning the scaling of
the step-size $h$ in HMC. We also provide some insight
into the limiting behaviour of the resulting Markov chain,
under this scaling, in Propositions \ref{th:desplaz}
and \ref{thm:lim1}.

\subsection{HMC in the iid scenario}
We now study  the asymptotic behaviour of the HMC
algorithm in the iid scenario \eqref{eq:iid}, when the number $d$
of \lq particles\rq\ goes to infinity. We write
$Q=(q_i)_{i=1}^{d}$ and $P=(p_i)_{i=1}^{d}$ to distinguish the
individual components,
and use the following notation for the combination location/momentum:%
\begin{equation*}
X = (x_i)_{i=1}^d;\quad  x_i := (q_i,p_i) \in \bbR^{2m}.
\end{equation*}
We denote by $\mathcal{P}_q$ and $\mathcal{P}_p$ the projections
onto the position and momentum components of $x$, \emph{i.e.}
$\mathcal{P}_q(q,p) = q$, $\mathcal{P}_p(q,p) = p$.

We have:
\begin{equation*}
\mathcal{H}(Q,P) = \sum_{i=1}^d H(q_i,p_i); \quad
H(q,p) := \frac{1}{2}\langle p, M^{-1} p\rangle+ V(q) \ ,
\end{equation*}
where $M$ is a $m\times m$ symmetric, positive definite matrix.
The Hamiltonian differential equations for a single
($m$-dimensional) particle are then
\begin{equation}\label{eqn:ham2}
\frac{dq}{dt}=M^{-1}p,\quad \frac{dp}{dt}=-\nabla V(q)\ ,
\end{equation}
where ${V}: \mathbb{R}^{m} \rightarrow \bbR$. We denote the
corresponding flow by $\phi_t$ and the leapfrog solution operator
over one $h$-step by $\psi_h$.

Thus the acceptance probability   for the evolution of the $d$
particles is given by (see Table 1):
\begin{equation}
\label{eq:acc}
a(X,Y) = 1 \wedge \exp \Big (
\sum_{i=1}^{d} \bigl[ H (x_i) - H(\psith(x_i)) \bigr] \Big)
\end{equation}
with $Y=(y_i)_{i=1}^{d}=\Psi_h^{(T)}(X)$ denoting the HMC
proposal. Note that  the leapfrog scheme (\ref{leap-frog}) is
applied independently for each of the $d$ particles $(q_i,p_{i})$;
the different co-ordinates are only connected through the
accept/reject decision based on  (\ref{eq:acc}).

\subsection{Energy increments}
Our first aim is to estimate (in an analytical sense) the exponent in the right-hand side
of (\ref{eq:acc}). Since the $d$ particles play the same role, it
is sufficient to study a single term $H (x_i) - H(\psith(x_i))$.
We set
\begin{equation}
\label{eqn:delta} \Delta(x,h) := H(\psith(x) ) - H(\psit(x))   =
H(\psith(x) ) - H(x)  \ .
\end{equation}
This is the energy change, due to the leapfrog scheme, over $0
\leq t \leq T$, with step-size $h$ and initial condition $x$,
which by conservation of energy under the true dynamics,
is simply the energy error at time $T$. We
will  study the first and second moments:
\begin{align}
\mu(h) &:= \E\,[\,\Delta(x,h)\,] =
\int_{\mathbb{R}^{2m}} \Delta(x,h)\, e^{-H(x)}dx \nonumber\ ,\\
s^2(h) &:= \E\,[\Delta(x,h)|^2] \ ,\nonumber
\end{align}
and the corresponding variance
\begin{equation*}
\sigma^2(h) = s^2(h) - \mu^2(h)\ .
\end{equation*}

If
the integrator were exactly energy-preserving, one would have
$\Delta \equiv 0$ and all proposals would be accepted. However it
is well known that the size of $\Delta(x,h)$ is in general no
better than the size of the integration error $ \psith(x)-
\psit(x)$, \emph{i.e.} $\mathcal{O}(h^2)$. In fact, under natural smoothness
assumptions on $V$ the following condition holds (see Section
\ref{sec:energyver} for a proof):
\begin{cond}
\label{cond:delta} There exist functions $\alpha(x)$, $\rho(x,h)$
such that
\begin{align} \label{eqn:deltaerr}
\Delta(x,h) &= h^2 \alpha(x) + h^2\rho(x,h)
\end{align}
with $\lim_{h \rightarrow 0} \rho(x,h) = 0$.
\end{cond}

Furthermore in the proofs of the theorems below we shall use an
additional condition to control the variation of $\Delta$ as a
function of $x$. This condition will be shown in Section
\ref{sec:energyver} to hold under suitable assumptions on the
growth of $V$ and its derivatives.
\begin{cond}
\label{cond:beta}
There exists a function $D: \bbR^{2m} \rightarrow \bbR$ such that
\begin{equation*}
\sup_{0 \leq h \leq 1} \,\frac{|\Delta(x,h)|^2}{h^4} \leq D(x)\ , 
\end{equation*}
with 
\begin{equation*}
\int_{\bbR^{2m}} \,D(x)\, e^{-H(x)} dx < \infty \ .
\end{equation*}
\end{cond}

Key to the proof of  Theorem \ref{thm:lim} is the fact
that the average energy increment scales as ${\mathcal O}(h^4)$.
We show this in Proposition \ref{thm:2musig}
using the following simple lemma that holds for general
volume preserving, time reversible integrators:
\begin{lem} \label{lem:main}
Let $\psith$ be any volume preserving, time reversible numerical
integrator of the Hamiltonian equations (\ref{eqn:ham2}) and
$\Delta(x,h): \bbR^{2m} \times \bbR_+ \rightarrow \bbR$ be  as in
\eqref{eqn:delta}. If $\phi:\R\rightarrow \R$  is an odd function
then:
\begin{equation*}
\int_{\mathbb{R}^{2m}}  \phi(\Delta(x,h))\, e^{-H(x)} \,dx =
-\int_{\mathbb{R}^{2m}}  \phi(\Delta(x,h))\, e^{-H(\psith(x))} \,dx
\end{equation*}
provided at least one of the integrals above exist.
If $\phi$ is an even function, then:
\begin{equation*}
\int_{\mathbb{R}^{2m}}  \phi(\Delta(x,h))\, e^{-H(x)} \,dx =
\int_{\mathbb{R}^{2m}}  \phi(\Delta(x,h))\, e^{-H(\psith(x))} \,dx \ ,
\end{equation*}
provided at least one of the integrals above exist.
\end{lem}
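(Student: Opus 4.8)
The plan is to exploit the change of variables $x \mapsto \psith(x)$ in the integral on the right-hand side, using the two structural properties of the integrator: volume preservation (so the Jacobian is $1$) and time reversibility. First I would record the key identity that time reversibility gives for the energy increment. Writing $\mathcal{S}(q,p)=(q,-p)$ for the momentum-flip, reversibility of $\psith$ means $\mathcal{S}\circ(\psith)^{-1}\circ\mathcal{S}=\psith$, equivalently $(\psith)^{-1}=\mathcal{S}\circ\psith\circ\mathcal{S}$. Since $H\circ\mathcal{S}=H$ (the kinetic energy is even in $p$), one computes, for any $x$,
\begin{align*}
\Delta\bigl(\psith(x),h\bigr) &= H\bigl(\psith(\psith(x))\bigr) - H\bigl(\psith(x)\bigr).
\end{align*}
That is not quite what is needed; instead the cleaner route is to show $\Delta(\mathcal{S}\psith(x),h) = -\Delta(x,h)$. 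Indeed, applying $\psith$ to $\mathcal{S}\psith(x)$ and using $(\psith)^{-1}=\mathcal{S}\psith\mathcal{S}$ gives $\psith(\mathcal{S}\psith(x)) = \mathcal{S}(\mathcal{S}\psith\mathcal{S})^{-1}(\mathcal{S}\psith(x))$... I would be careful here and simply verify directly from the definition that $\Delta(\psith(x),h)$, after the flip, equals $H(x)-H(\psith(x)) = -\Delta(x,h)$; this is the identity
\begin{equation*}
\Delta\bigl(\mathcal{S}\,\psith(x),\,h\bigr) = -\,\Delta(x,h),
\end{equation*}
which holds because $\psith(\mathcal{S}\psith(x))=\mathcal{S}(x)$ by reversibility, and $H\circ\mathcal{S}=H$.

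Granting that identity, I would substitute $u = \mathcal{S}\,\psith(x)$ in the right-hand integral. The map $x\mapsto\mathcal{S}\psith(x)$ is a composition of $\psith$ (volume preserving, Jacobian determinant $\pm 1$ in absolute value — in fact $+1$) and $\mathcal{S}$ (a linear involution with Jacobian determinant $\pm 1$, absolute value $1$), so it is a measure-preserving bijection of $\mathbb{R}^{2m}$. Under this substitution, $H(\psith(x))$ in the exponent becomes $H(\mathcal{S}^{-1}u) = H(u)$, and $\phi(\Delta(x,h))$ becomes $\phi(\Delta(\mathcal{S}^{-1}(\mathcal{S}\psith\cdot)^{-1} u,h))$; using the reversibility identity this simplifies to $\phi(-\Delta(u,h))$, which is $-\phi(\Delta(u,h))$ when $\phi$ is odd and $\phi(\Delta(u,h))$ when $\phi$ is even. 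This delivers exactly the two claimed equalities, and the qualifier "provided at least one of the integrals exists" is handled because the change of variables is a genuine measure-preserving bijection, so finiteness transfers and the identity is valid in the extended sense even when both sides are $+\infty$ (for $\phi\ge 0$).

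\textbf{Main obstacle.} The only real care needed is bookkeeping the reversibility identity correctly — getting the composition $\mathcal{S}\circ(\psith)^{-1}\circ\mathcal{S}=\psith$ to yield $\Delta(\mathcal{S}\psith(x),h)=-\Delta(x,h)$ without sign or inverse errors, and confirming that $\psith$, being an iterate $\psi_h^{\lfloor T/h\rfloor}$ of the leapfrog step, inherits both volume preservation and reversibility (which is standard, e.g. \cite{Hair:etal:06,Leim:Reic:04,Sanz:Calv:94}, and was already asserted for $\Psi^{(T)}_h$ in the excerpt). A secondary technical point is justifying the change of variables and the transfer of integrability; since the transformation is a $C^1$ diffeomorphism of $\mathbb{R}^{2m}$ with Jacobian of absolute value $1$, this is routine and the "at least one integral exists" hypothesis makes the statement clean. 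No hard analysis is involved — the lemma is purely a symmetry computation.
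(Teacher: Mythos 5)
Your proposal is correct and is essentially the paper's own argument: the identity $\Delta(\mathcal{S}\,\psith(x),h)=-\Delta(x,h)$ combined with the measure-preserving substitution $x\mapsto\mathcal{S}\,\psith(x)$ is exactly the composite of the paper's two changes of variables ($y=\psith(x)$ followed by $y=\mathcal{S}z$), together with $H\circ\mathcal{S}=H$ and unit Jacobians from volume preservation. The only difference is presentational (you fold the two substitutions into one and apply them to the right-hand integral rather than the left), so no further comment is needed.
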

\begin{proof}
See  Section \ref{sec:proofs}.
\end{proof}

Applying this lemma with $\phi(u) = u$, we obtain
\begin{equation*}
\mu(h) =  -\int_{\mathbb{R}^{2m}}  \Delta(x,h)\,e^{-H(\psith (x) )} \,dx \ ,
\end{equation*}
which implies that
\begin{equation}
2\mu(h) =  \int_{\mathbb{R}^{2m}}
\Delta(x,h)\, \big[1 - \exp(-\Delta(x,h))\big]\, e^{-H(x)} \,dx \ .\label{eqn:muhfin}
\end{equation}

We now use first  the  inequality $|e^u - 1| \leq |u| (e^u +1)$
and then Lemma \ref{lem:main}
with $\phi(u) = u^2$ to conclude that
\begin{align}
|2\,\mu(h)| &\leq \int_{\mathbb{R}^{2m}} |\Delta(x,h)|^2 \, e^{-H(\psith(x))} dx +\int_{\mathbb{R}^{2m}} |\Delta(x,h)|^2 \, e^{-H(x)} dx  \nonumber \\
& \leq 2\int_{\mathbb{R}^{2m}} |\Delta(x,h)|^2 e^{-H(x)} dx =  2\,s^2(h)\ .\label{eqn:mubd}
\end{align}
The bound in \eqref{eqn:mubd} is important: it shows  that the
average of $\Delta(x,h)$ is actually of the order of (the average
of) $\Delta(x,h)^2$. Since  for the second-order leapfrog scheme
$\Delta(x,h) = \mathcal{O}(h^2)$,  we see from \eqref{eqn:mubd} that we may
expect the average $\mu(h)$ to actually behave as $\mathcal{O}(h^4)$. This
is made precise in the following theorem.
\begin{prop}
\label{thm:2musig} If the potential $V$ is such that Conditions
\ref{cond:delta} and \ref{cond:beta} hold for the leapfrog
integrator $\psith$, then
\begin{align*}
\lim_{h \rightarrow 0}  \frac{\mu(h)}{h^4} = \mu\ ,\quad
\lim_{h \rightarrow 0}  \frac{\sigma^2(h)}{h^4} = \Sigma \ ,
\end{align*}
for the constants:
\begin{equation*}
\Sigma  = \int_{R^{2m}}\alpha^2(x) \,e^{-H(x)}\,dx\ ;\quad  \mu = \Sigma/2\ .
\end{equation*}
\end{prop}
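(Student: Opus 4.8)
The plan is to derive both limits from the two exact identities already established, namely the representation \eqref{eqn:muhfin} for $2\mu(h)$ and the symmetry Lemma \ref{lem:main}, together with the pointwise expansion of Condition \ref{cond:delta} and the dominated-convergence hypothesis of Condition \ref{cond:beta}. First I would treat $\mu(h)$. Starting from
\begin{equation*}
2\mu(h) = \int_{\R^{2m}} \Delta(x,h)\,\bigl[1 - e^{-\Delta(x,h)}\bigr]\, e^{-H(x)}\,dx,
\end{equation*}
I would use the elementary expansion $1 - e^{-u} = u + \mathcal{O}(u^2)$ (valid with a uniform constant on any bounded set of $u$, and controlled globally via Condition \ref{cond:beta}) to write the integrand as $\Delta(x,h)^2 + \Delta(x,h)^2 r(x,h)$ where $r(x,h)\to 0$ pointwise. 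Substituting $\Delta(x,h) = h^2\alpha(x) + h^2\rho(x,h)$ from Condition \ref{cond:delta} gives $\Delta(x,h)^2 = h^4\alpha(x)^2 + h^4(\cdots)$ where the remainder tends to $0$ pointwise and, crucially, is dominated by $h^4 D(x)$ thanks to Condition \ref{cond:beta}. Dividing by $h^4$ and applying the dominated convergence theorem yields
\begin{equation*}
\lim_{h\to 0}\frac{2\mu(h)}{h^4} = \int_{\R^{2m}}\alpha(x)^2\, e^{-H(x)}\,dx = \Sigma,
\end{equation*}
that is, $\mu(h)/h^4 \to \Sigma/2 = \mu$.

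Next I would handle $s^2(h) = \E[|\Delta(x,h)|^2]$ directly: write $|\Delta(x,h)|^2/h^4 = (\alpha(x) + \rho(x,h))^2 \to \alpha(x)^2$ pointwise, dominated by $D(x)\in L^1(e^{-H}dx)$, so by dominated convergence $s^2(h)/h^4 \to \Sigma$ as well. Since $\sigma^2(h) = s^2(h) - \mu^2(h)$ and we have just shown $\mu(h) = \mathcal{O}(h^4)$, the term $\mu^2(h) = \mathcal{O}(h^8)$ is negligible after dividing by $h^4$, so $\sigma^2(h)/h^4 \to \Sigma$ too. This also makes transparent the identity $\mu = \Sigma/2$: the leading-order contributions to $\mu(h)$ and to $\tfrac12 s^2(h)$ coincide, which is precisely the content of the bound \eqref{eqn:mubd} sharpened to an asymptotic equality.

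The one point requiring genuine care — and the step I expect to be the main obstacle — is the uniform integrable domination needed to pass the limit inside the integral in the $\mu(h)$ computation. In the expansion $1 - e^{-u} = u + g(u)$ with $|g(u)| \le C|u|^2 e^{|u|}$, the factor $e^{|\Delta(x,h)|}$ is not obviously integrable against $e^{-H(x)}$ uniformly in $h$, so one cannot naively bound the remainder by a fixed $L^1$ function. The fix is to exploit Lemma \ref{lem:main} once more as in the derivation of \eqref{eqn:mubd}: rather than expanding $1 - e^{-\Delta}$ crudely, split the integrand symmetrically between the weights $e^{-H(x)}$ and $e^{-H(\psith(x))}$ so that the awkward exponential is absorbed into a second probability density, reducing every bound to one involving only $|\Delta(x,h)|^2 \le h^4 D(x)$ against $e^{-H(x)}$. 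Concretely, one writes $2\mu(h) - \int \Delta(x,h)^2 e^{-H(x)}dx$ as an integral of $\Delta(x,h)^2[\,\text{something}\to 0\,]$ against $e^{-H(x)}$ plus a symmetric term handled by the lemma, and Condition \ref{cond:beta} then supplies the dominating function. Once this domination is set up cleanly, the two limits follow immediately from the dominated convergence theorem and the pointwise expansion of Condition \ref{cond:delta}.
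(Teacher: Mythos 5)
Your proposal is correct and follows essentially the same route as the paper: dominated convergence with the domination $|\Delta(x,h)|^2\le h^4 D(x)$ from Condition \ref{cond:beta} for the second-moment limit, and the symmetrized identity \eqref{eqn:muhfin} together with Lemma \ref{lem:main} (with $\phi(u)=u^2$) to absorb the awkward factor $e^{-\Delta(x,h)}$ into the weight $e^{-H(\psith(x))}$ when dominating the remainder in the expansion of $1-e^{-\Delta}$, which is exactly the paper's treatment of the mean. The only difference is the ordering (the paper first obtains the $\sigma^2(h)/h^4$ limit, uses $|2\mu(h)|\le 2s^2(h)$ to kill $\mu^2(h)/h^4$, and then shows $(2\mu(h)-\sigma^2(h))/h^4\to 0$), and the obstacle you flag, together with your fix, coincides with the paper's argument.
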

\begin{proof}
See Section \ref{sec:proofs}.
\end{proof}
Next, we perform explicit calculations for the harmonic oscillator
and verify the conclusions of Proposition \ref{thm:2musig}.
\begin{exam}[Harmonic Oscillator]
\label{exam:harmosc} Consider the Hamiltonian
\begin{equation*}
H(q,p) = \frac12 p^2 + \frac12  q^2 \
\end{equation*}
that gives rise to the system
\begin{equation*}
\left( \begin{matrix}  dq/dt\\ dp/dt \end{matrix} \right)  =
\left( \begin{matrix} p \\ -q \end{matrix} \right) \ ,
\end{equation*}
with solutions
\begin{equation*}
\left( \begin{matrix}  q(t) \\ p(t) \end{matrix} \right)= \left(
\begin{matrix}  \cos( t)&  \sin(t)\\  -  \sin( t) & \cos( t)
\end{matrix} \right) \left( \begin{matrix}  q(0) \\
p(0)\end{matrix} \right) \ .
\end{equation*}

In this case, the leapfrog integration can be written as: 
\begin{equation*}
\psi_h = \psi_h(q,p) = \left( \begin{matrix}   1 - h^2 /2& h
 \\-h + h^3/4 & 1-h^2/2  \end{matrix} \right)\left( \begin{matrix}
q \\ p \end{matrix} \right) \nonumber = \Xi \left( \begin{matrix}
q
\\ p \end{matrix} \right)  \ ,
\label{eqn:matrixeqnexe2}
\end{equation*}
and, accordingly, the numerical solution after $\lfloor
\frac{1}{h} \rfloor$ steps is given by:
\begin{equation*}
\psi_h^{\tiny{(1)}}(q,p)= \Xi^{\lfloor \frac{1}{h} \rfloor}\left(
\begin{matrix}  q \\ p \end{matrix} \right) \ .
\end{equation*}
Diagonalizing $\Xi$ and exponentiating yields:
\begin{equation*}
\Xi^n = \left( \begin{matrix}  \cos(\theta n) &  \frac{1}{ \sqrt{1 -h^2 /4}} \,\sin(\theta n)  \\
-\sqrt{1 -h^2 /4}\,\sin(\theta n)& \cos(\theta n)
\end{matrix} \right)
\end{equation*}
where $\theta = \cos^{-1}(1- h^2/2)$. Using, for instance,
MATHEMATICA, one can now obtain the Taylor expansion:
\begin{equation*}
\Delta(x,h)=  H(\psi_h^{\tiny{(1)}}(x)) - H(x) = h^2  \alpha(x)+
h^4 \beta (x)  +\mathcal{O}(h^6)
\end{equation*}
where:
\begin{gather*}
\alpha(q,p) = \left( (p^2 - q^2 )\sin ^2(1) + p q\sin (2)\right)/8 \ ;\\
\beta(q,p) = \Big(-q^2\sin(2) +pq \big(2 \cos (2)+3 \sin (2)\big)
+p^2 \big(3-3
 \cos (2)+\sin (2)\big)\Big)/192 \ .
\end{gather*}
Notice that, in the stationary regime, $q$, $p$ are standard
normal variables. Therefore, the expectation of $\alpha(x)$ is
$0$. Tedious calculations  give:
\begin{equation*}
\mathrm{Var}\,[\,\alpha(x)\,] =  {1 \over 16} \sin^2(1)\ ,\quad
\E\,[\,\beta(x)\,] = {1 \over 32} \sin^2(1) \ ,
\end{equation*}
in agreement with Proposition \ref{thm:2musig}.
\end{exam}

\subsection{Expected acceptance probability}
We are now in a position to identify the scaling for $h$ that
gives non-trivial acceptance probability as $d\rightarrow\infty$.
\begin{thm}
\label{thm:lim} Assume that the potential $V$ is such that the
leapfrog integrator $\psith$ satisfies Conditions \ref{cond:delta}
and~\ref{cond:beta} and  that
\begin{equation}\label{eq:escala}
h = l\cdot d^{-1/4} \ ,
\end{equation}
for a constant $l>0$. Then in stationarity, \textit{i.e.,} for $X
\sim \exp(-\mathcal{H})$,
\begin{equation*}
\lim_{d\rightarrow\infty}\mathbb{E}\,[\,a(X,Y)\,] =
2\,\Phi(-l^{2}\sqrt{\Sigma}/2)=:a(l)
\end{equation*}
where the constant $\Sigma$ is as defined in Proposition 
\ref{thm:2musig}.
\end{thm}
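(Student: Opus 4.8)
The plan is to reduce the statement to a central limit theorem for the total energy increment, followed by a Gaussian computation. Write $\Delta_i := \Delta(x_i,h) = H(\psith(x_i)) - H(x_i)$ and $R_d := \sum_{i=1}^d \Delta_i$, so that by \eqref{eq:acc} we have $a(X,Y) = 1 \wedge e^{-R_d}$. In stationarity the components $x_i$ are i.i.d.\ with law $\propto e^{-H}$, hence for each fixed $d$ the $\Delta_i$ are i.i.d.\ with common distribution depending on $d$ only through the step-size $h = l\,d^{-1/4}$. Using Proposition \ref{thm:2musig} together with $h^4 = l^4/d$,
\[
\mathbb{E}[R_d] = d\,\mu(h) = l^4\,\frac{\mu(h)}{h^4}\to l^4\mu = \frac{l^4\Sigma}{2},\qquad
\mathrm{Var}(R_d) = d\,\sigma^2(h) = l^4\,\frac{\sigma^2(h)}{h^4}\to l^4\Sigma .
\]
Note the crucial algebraic fact, inherited from the relation $\mu = \Sigma/2$ in Proposition \ref{thm:2musig}, that the limiting mean equals one half the limiting variance.

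Next I would establish that $R_d \Rightarrow \mathcal{N}\!\big(l^4\Sigma/2,\ l^4\Sigma\big)$ as $d\to\infty$ (assuming $\Sigma>0$; the degenerate case $\Sigma=0$ forces $R_d\to 0$ in probability and $a(l)=2\Phi(0)=1$, consistent with the stated formula, and may be treated separately and trivially). Since the $\Delta_i$ form a triangular array of row-wise i.i.d.\ summands with $\mathrm{Var}(\Delta_i)/\mathrm{Var}(R_d)=1/d\to 0$, it suffices to verify the Lindeberg condition and invoke Lindeberg--Feller together with Slutsky. Condition \ref{cond:beta} gives the uniform bound $|\Delta_i|^2 \le h^4 D(x_i) = (l^4/d)\,D(x_i)$ with $D$ integrable against $e^{-H}$; combined with $|\mu(h)|=\mathcal{O}(1/d)$ this shows that, for any $\varepsilon>0$ and $d$ large, the event $\{|\Delta_i - \mu(h)| > \varepsilon\sqrt{\mathrm{Var}(R_d)}\}$ is contained in $\{D(x_i) > c_\varepsilon\, d\}$ for a constant $c_\varepsilon>0$, whence
\[
\frac{1}{\mathrm{Var}(R_d)}\sum_{i=1}^d \mathbb{E}\!\left[(\Delta_i-\mu(h))^2\,\mathbf{1}_{\{|\Delta_i-\mu(h)|>\varepsilon\sqrt{\mathrm{Var}(R_d)}\}}\right]
\ \le\ \frac{C}{\mathrm{Var}(R_d)}\,\mathbb{E}\!\left[D(x_1)\,\mathbf{1}_{\{D(x_1)>c_\varepsilon d\}}\right] + o(1)\ \longrightarrow\ 0
\]
by dominated convergence, using $\mathrm{Var}(R_d)\to l^4\Sigma>0$ and integrability of $D(x_1)$.

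It then remains to pass to the limit in $\mathbb{E}[a(X,Y)] = \mathbb{E}[1\wedge e^{-R_d}]$. The map $r\mapsto 1\wedge e^{-r}$ is bounded and continuous, so weak convergence of $R_d$ gives $\mathbb{E}[1\wedge e^{-R_d}] \to \mathbb{E}[1\wedge e^{-Z}]$ with $Z\sim\mathcal{N}(l^4\Sigma/2,\ l^4\Sigma)$. Finally I would compute this Gaussian expectation directly: splitting $\mathbb{E}[1\wedge e^{-Z}] = \mathrm{P}(Z\le 0) + \mathbb{E}[e^{-Z}\mathbf{1}_{\{Z>0\}}]$ and completing the square in the second term yields the general identity $\mathbb{E}[1\wedge e^{-Z}] = \Phi(-\mu/\sigma) + e^{\sigma^2/2-\mu}\,\Phi\!\big((\mu-\sigma^2)/\sigma\big)$ for $Z\sim\mathcal{N}(\mu,\sigma^2)$; specialising to $\mu=\sigma^2/2$ (which holds here with $\sigma = l^2\sqrt{\Sigma}$) collapses the exponential prefactor to $1$ and both $\Phi$-arguments to $-\sigma/2$, giving $2\Phi(-\sigma/2) = 2\Phi(-l^2\sqrt{\Sigma}/2) = a(l)$.

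The main obstacle is the second step: controlling the triangular array through the Lindeberg condition, where one genuinely needs the uniform-in-$h$ domination of Condition \ref{cond:beta} (the pointwise $h^2$-expansion of Condition \ref{cond:delta} alone does not license the interchange of limits), together with attention to non-degeneracy $\Sigma>0$. The remaining ingredients are routine: the first two moments come straight from Proposition \ref{thm:2musig}, and the closing Gaussian computation is the same one that underlies the RWM/MALA scaling results.
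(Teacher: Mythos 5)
Your proposal is correct and follows essentially the same route as the paper: a Lindeberg--Feller central limit theorem for the i.i.d.\ energy increments with moments supplied by Proposition \ref{thm:2musig}, followed by passing to the limit in the bounded continuous map $u\mapsto 1\wedge e^{u}$ and evaluating the Gaussian expectation. The only difference is that you spell out the details the paper leaves implicit, namely the Lindeberg verification via the uniform domination in Condition \ref{cond:beta} and the closed-form computation of $\E\,[\,1\wedge e^{-Z}\,]$, which the paper delegates to \cite{robe:97}.
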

\begin{proof}
To grasp the main idea, note that the
acceptance probability (\ref{eq:acc}) is given by
\begin{equation}
\label{eq:R} a(X,Y) = 1\wedge e^{R_d}\ ; \qquad R_d =
-\sum_{i=1}^{d} \Delta(x_i,h) \ .
\end{equation}
Due to the simple structure of the target density and
stationarity, the terms $\Delta(x_i,h)$ being added in
\eqref{eq:R} are iid random variables. Since the expectation and
standard deviation of $\Delta(x,h)$ are both $\mathcal{O}(h^4)$  and we have
$d$ terms, the natural scaling to obtain a distributional limit is
given by (\ref{eq:escala}). Then
$R_d \approx N(-\frac12 l^4\Sigma,l^4\Sigma)$
and the desired result follows. See Section \ref{sec:proofs}
for a detailed proof.
\end{proof}
In Theorem \ref{thm:lim} the limit acceptance probability arises
from the use of the Central Limit Theorem. If Condition
\ref{cond:beta} is not satisfied and $\sigma^2(h) = \infty$, then
a Gaussian limit is not guaranteed and it may be necessary to
consider a different scaling to obtain a heavy tailed limiting
distribution such as a stable law.

The  scaling (\ref{eq:escala}) is a direct consequence
of the fact that the leapfrog integrator possesses second order
accuracy. Arguments similar to those  used above prove that the
use of a volume-preserving, symmetric $\nu$-th order integrator
would result in a scaling $h =
\mathcal{O}(d^{-1/(2\nu)})$ ($\nu$ is an even integer)
to obtain an acceptance probability of $\mathcal{O}(1).$

\subsection{The displacement of one particle in a transition}
We now turn our attention to the displacement $q_1^{n+1}-q_1^n$ of
a single particle in a transition $n\rightarrow n+1$ of the chain.
Note that clearly
\begin{equation}
\label{eq:q} q_1^{n+1}= I^{n}\cdot
\mathcal{P}_q\,\psith(q_1^{n},p_1^{n})+\bigl(1-I^{n})q_1^{n};
\quad I^{n} = \mathbb{I}_{\,U^{n}\le a(X^{n},Y^{n})} \ .
\end{equation}

While Conditions \ref{cond:delta} and \ref{cond:beta}
above refer to the error in energy, the
proof of the next results requires a condition on the leapfrog
integration error in the dynamic variables $q$ and $p$. 
In Section \ref{sec:energyver} we describe conditions on $V$
that guarantee the fulfillment of this condition.
\begin{cond}
\label{cond:E} There exists a function $E: \bbR^{2m} \rightarrow
\bbR$ such that
\begin{equation*}
\sup_{0 \leq h \leq 1} \,\frac{| \psith(x) -  \phi_{T}(x)|} {h^2} \leq E(x)\ , 
\end{equation*}
with
\begin{equation*}
\int_{\bbR^{2m}} \,E(x)^4\, e^{-H(x)} dx < \infty \ .
\end{equation*}
\end{cond}

Under the scaling (\ref{eq:escala}) and at stationarity, the
second moment $\E\,[\,(q_1^{n+1}-q_1^{n})^2\,]$ will also approach
a nontrivial limit:

\begin{prop}
\label{th:desplaz} Assume that the hypotheses of Theorem
\ref{thm:lim} and Condition \ref{cond:E} hold and, furthermore, that the
density $\exp(-V(q))$ possesses finite fourth moments. Then,
in stationarity,
\begin{equation*}
\lim_{d\rightarrow\infty}\E\,[\,(q_1^{n+1}-q_1^{n})^2\,]=
C_{J}\cdot a(l)
\end{equation*}
where the value of the constant $C_J$ is given by
\begin{equation*}
C_{J} = \E\,[\,(\mathcal{P}_q \psit(q,p)-q)^2\,]\ ; \quad
(q,p)\sim \exp\bigl(-H(q,p)\bigr) \ .
\end{equation*}
\end{prop}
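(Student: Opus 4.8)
The plan is to start from the exact representation \eqref{eq:q}, which writes the one-step displacement of the first particle as
\begin{equation*}
q_1^{n+1}-q_1^n = I^n\cdot\bigl(\mathcal{P}_q\psith(q_1^n,p_1^n)-q_1^n\bigr),
\end{equation*}
so that, since $(I^n)^2=I^n$,
\begin{equation*}
\E\,[\,(q_1^{n+1}-q_1^n)^2\,] = \E\,\bigl[\,I^n\bigl(\mathcal{P}_q\psith(q_1^n,p_1^n)-q_1^n\bigr)^2\,\bigr].
\end{equation*}
Conditioning on $X^n$ (which at stationarity has the product law $\prod_i e^{-H(x_i)}$) and on $U^n$, the indicator $I^n$ has conditional mean $a(X^n,Y^n)$, but the key difficulty is that $a(X^n,Y^n)$ depends on \emph{all} $d$ particles, so it is not independent of the factor $(\mathcal{P}_q\psith(q_1^n,p_1^n)-q_1^n)^2$ which depends on the first particle. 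The strategy for handling this is to split off the first coordinate: write $R_d = -\Delta(x_1,h)-\sum_{i=2}^d\Delta(x_i,h) =: -\Delta(x_1,h)+R_d'$, and argue that the single term $\Delta(x_1,h)=\mathcal{O}(h^4)=\mathcal{O}(l^4/d)\to 0$ in probability, so that $a(X^n,Y^n)=1\wedge e^{R_d}$ and $1\wedge e^{R_d'}$ are asymptotically equal, with $1\wedge e^{R_d'}$ now independent of the first particle.

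Next I would identify the two limits separately. For the displacement factor: under the scaling $h=l\,d^{-1/4}$ we have $\psith=\psi^{(1)}_{l d^{-1/4}}$, i.e. $\lfloor d^{1/4}/l\rfloor$ leapfrog steps of size $l d^{-1/4}$ approximating the time-$T=1$ flow; Condition \ref{cond:E} gives $|\psith(x)-\phi_T(x)|\le h^2 E(x)$ with $E\in L^4(e^{-H})$, so $\mathcal{P}_q\psith(q_1,p_1)-q_1 \to \mathcal{P}_q\phi_T(q_1,p_1)-q_1$ both almost surely and in $L^2$ (using the $L^4$ bound on $E$ and finiteness of the fourth moment of $\exp(-V)$ to get uniform integrability of the squares). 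This gives the constant $C_J$. For the acceptance factor, Theorem \ref{thm:lim} already tells us $\E\,[\,1\wedge e^{R_d}\,]\to a(l)$; what I additionally need is that $1\wedge e^{R_d'}$ converges in distribution to a constant — but it does not, it converges to the same non-degenerate limit as $1\wedge e^{R_d}$. So the clean route is: since $1\wedge e^{R_d'}$ is independent of the first-particle quantity $G_d:=(\mathcal{P}_q\psith(q_1^n,p_1^n)-q_1^n)^2$, we get
\begin{equation*}
\E\,[\,(1\wedge e^{R_d'})\,G_d\,] = \E\,[\,1\wedge e^{R_d'}\,]\cdot\E\,[\,G_d\,],
\end{equation*}
and then let $d\to\infty$: the first factor tends to $a(l)$ (by Theorem \ref{thm:lim} plus the negligibility of the first term), the second tends to $C_J$ by the $L^2$ convergence above. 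Finally one removes the error from replacing $I^n$-mean $a(X^n,Y^n)$ by $1\wedge e^{R_d'}$ and from the correlation: $|\E[(1\wedge e^{R_d})G_d]-\E[(1\wedge e^{R_d'})G_d]|\le \E[\,|{\rm e}^{-\Delta(x_1,h)}-1|\,G_d\,]$, which is controlled by Cauchy--Schwarz using Condition \ref{cond:beta} (giving $\Delta(x_1,h)=\mathcal{O}(h^4)$ in $L^2$) together with the uniform $L^2$ bound on $G_d$, and this bound is $\mathcal{O}(h^4)=\mathcal{O}(1/d)\to 0$.

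The main obstacle I anticipate is precisely this decorrelation argument: making rigorous that the acceptance probability, a function of all coordinates, "factorises" against the first-coordinate displacement in the limit. The honest way to do it is to prove joint convergence of the pair $\bigl(G_d,\ R_d'\bigr)$ — $G_d$ is a fixed (in $d$) random variable depending only on $x_1$, while $R_d'$ is a sum of $d-1$ iid $\mathcal{O}(h^4)$ terms with $h$ shrinking, so $(G_d,R_d')\Rightarrow (G_\infty, N(-\tfrac12 l^4\Sigma, l^4\Sigma))$ with the two components independent because $R_d'$ does not involve $x_1$ at all. Then $(1\wedge e^{R_d})\,G_d \Rightarrow (1\wedge e^{N(-\tfrac12 l^4\Sigma,l^4\Sigma)})\,G_\infty$ by the continuous mapping theorem (after absorbing the $\Delta(x_1,h)\to 0$ term), and a uniform-integrability argument (the acceptance factor is bounded by $1$ and $G_d$ is bounded in $L^2$) upgrades this to convergence of expectations. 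Taking expectations and using independence in the limit yields $\E[1\wedge e^{N(\cdots)}]\cdot\E[G_\infty] = a(l)\cdot C_J$, which is the claim. The remaining steps — verifying the $L^2$ convergence $G_d\to G_\infty$ from Condition \ref{cond:E} and the fourth-moment hypothesis, and the various Cauchy--Schwarz estimates — are routine given the conditions already assumed.
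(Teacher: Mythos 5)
Your proposal is correct and follows essentially the same route as the paper: you strip the first particle's term $\Delta(x_1,h)$ out of the acceptance exponent (the paper's $a^-(X^n,Y^n)$), exploit the resulting independence to factorise the expectation into an acceptance factor tending to $a(l)$ and a displacement factor tending to $C_J$ via Condition \ref{cond:E} and the fourth-moment hypothesis, and control the replacement error by Cauchy--Schwarz. The only slip is cosmetic: $\|\Delta(x_1,h)\|_{L^2}=\mathcal{O}(h^2)=\mathcal{O}(d^{-1/2})$ (only the mean is $\mathcal{O}(h^4)$), and it is cleaner to bound $|1\wedge e^{R_d}-1\wedge e^{R_d'}|\le|\Delta(x_1,h)|$ via the Lipschitz property of $u\mapsto 1\wedge e^{u}$, as the paper does, rather than through $|e^{-\Delta(x_1,h)}-1|$; either way the error vanishes.
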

\begin{proof}
See Section \ref{sec:proofs}.
\end{proof}

We will use this proposition in Section \ref{sec:optimal}.
\subsection{The limit dynamics}
We now discuss the limiting dynamics of the Markov chain,
under the same assumptions made in Proposition \ref{th:desplaz}.
For HCM (as for RWM or MALA) 
the marginal process $\{q_1^n\}_{n\ge 0}$
is not Markovian w.r.t.\@ its own filtration since its dynamics
depend on the current  position of all $d$ particles via the
acceptance probability $a(X^n,Y^n)$ (see (\ref{eq:q})). In the
case of MALA and RWM, $\{q_1^n\}_{n\ge 0}$ is
\emph{asymptotically} Markovian: as $d\rightarrow\infty$ the
effect of the rest of the particles gets averaged to a constant
via the Strong Law of Large Numbers. This allows for the
interpolants of \eqref{eq:scale3}
to converge to solutions of the SDE (\ref{eq:limsde}), which
defines a Markov process. We will now argue that for HCM
$\{q_1^n\}_{n\ge 0}$ cannot be expected to be
\emph{asymptotically} Markovian. In order to simplify the
exposition we will not present all the technicalities of the
argument that follows.

It is well known (see for instance \cite{skee:99})
that, due to time reversibility and under suitable smoothness
assumptions on $V$, the energy increments of the leapfrog
integrator may be expanded in even powers of $h$ as follows (cf.
(\ref{eqn:deltaerr})):
\begin{equation*}
\Delta(x,h) = h^2\alpha(x) + h^4 \beta(x)+ \mathcal{O}(h^6) \ .
\end{equation*}
Here $\E\,[\,\alpha(x)\,]=0$ because from 
Proposition \ref{thm:2musig}
we know that $\E\,[\,\Delta(x,h)\,]=\mathcal{O}(h^4)$.
Ignoring
$\mathcal{O}(h^6)$-terms,  we can write:
\begin{equation*}
a(X^n,Y^n) =  1 \wedge e^{R_{1,d}^n+R_{2,d}^n}\\
\end{equation*}
with
\begin{gather*}
R_{1,d}^n = -h^2\,\sum_{i=1}^{d}\bigl\{
\alpha(x_i^n)- \E\,[\,\alpha(x_{i}^n)\,|\,q_{i}^n\,]\bigr\} -h^4\sum_{i=1}^{d}\beta(x_i^n)\ ,\\
R_{2,d}^n =-
h^2\sum_{i=1}^{d}\E\,[\,\alpha(x_{i}^n)\,|\,q_{i}^n\,]\ \ .
\end{gather*}
Under appropriate conditions, $R_{1,d}^n$ converges, as
$d\rightarrow \infty$, to a Gaussian limit independent of the
$\sigma$-algebra $\sigma(q_{1}^n,q_{2}^n,\ldots)$. To see that,
note that, due to the Strong Law of Large Numbers and since $h^4 =
l^4/d$, the second sum in  $R_{1,d}^n$  converges a.s.\@ to a
constant. Conditionally on $\sigma(q_{1}^n,q_{2}^n,\ldots)$, the
distributional limit of the first term in $R_{1,d}^n$ is Gaussian
with zero mean and a variance determined by the the a.s.\@ limit
of $h^4\sum_{i=1}^{d}\bigl\{ \alpha(x_i^n)-
\E\,[\,\alpha(x_{i}^n)\,|\,q_{i}^n\,]\bigr\}^2$; this follows from
the
Martingale Central Limit Theorem (see e.g.\@ Theorem 3.2 of \cite{hall:80}).
On the other hand, the limit distribution of  $R^n_{2,d}$ is
Gaussian with zero mean but, in general, cannot be asymptotically
independent of $\sigma(q_{1}^0,q_{2}^0,\ldots)$. In the case of
RWM or MALA, the conditional expectations that play the role
played here by $\E\,[\,\alpha(x_{i}^n)\,|\,q_{i}^n\,]$  are
identically zero (see the expansions for the acceptance
probability in \cite{robe:97} and \cite{robe:98}) and this 
implies
that  the corresponding acceptance probabilities are
asymptotically independent from $\sigma(q_{1}^n,q_{2}^n,\ldots)$
and that the marginal processes $\{q_1^n\}_{n\ge 0}$ are
asymptotically Markovian.

The last result in this section provides insight into the limit
dynamics of $\{q_1^n\}_{n\ge 0}$:
\begin{prop}
\label{thm:lim1} Let $Q^{n}\sim \Pi(Q)$,  define
\begin{gather*}
\mathsf{q}_1^{n+1}= \mathsf{I}^{n}\cdot \mathcal{P}_q
\psit(q_1^n,p_1^n)+\bigl(1-\mathsf{I}^{n})q_1^{n}; \quad
\mathsf{I}^{n} = \mathbb{I}_{\,U^{n}\le a(l)} \ ,
\end{gather*}
and consider $q_1^{n+1}$  in (\ref{eq:q}). Then, under the
hypotheses of Proposition \ref{th:desplaz}, as $d\rightarrow \infty$:
\begin{equation*}
(q_1^{n},q_1^{n+1}) \stackrel{\mathcal{L}}{\longrightarrow}
(q_1^{n},\mathsf{q}_1^{n+1})\ .
\end{equation*}
\end{prop}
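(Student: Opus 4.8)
The plan is to peel off the contribution of the first particle to the acceptance ratio, pass to the limit for the remaining sum by the central limit theorem exactly as in the proof of Theorem~\ref{thm:lim}, and then finish by two applications of the continuous mapping theorem. Recall from \eqref{eq:q} that
\begin{equation*}
q_1^{n+1} = q_1^n + I^n\bigl(\mathcal{P}_q\psith(q_1^n,p_1^n) - q_1^n\bigr),\qquad I^n = \mathbb{I}_{\,U^n \le a(X^n,Y^n)},
\end{equation*}
with $a(X^n,Y^n) = 1\wedge e^{R_d^n}$ and $R_d^n = -\sum_{i=1}^{d}\Delta(x_i^n,h)$. Write $R_d^n = -\Delta(x_1^n,h) + \widetilde R_{d-1}^n$ with $\widetilde R_{d-1}^n := -\sum_{i=2}^{d}\Delta(x_i^n,h)$, which depends only on $(x_2^n,\dots,x_d^n)$ and is therefore independent of the triple $(q_1^n,p_1^n,U^n)$, whose joint law is the same for every $d$. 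Since $h = l\,d^{-1/4}\to 0$, Condition~\ref{cond:beta} (or Condition~\ref{cond:delta}) gives $|\Delta(x_1^n,h)| \le h^2\sqrt{D(x_1^n)}\to 0$ almost surely, so the term $\Delta(x_1^n,h)$ is asymptotically negligible. Repeating the computation in the proof of Theorem~\ref{thm:lim} with $d-1$ summands in place of $d$ (the one missing term alters the limiting mean and variance only by a vanishing amount, and Condition~\ref{cond:beta} still supplies the Lindeberg-type bound) shows that $\widetilde R_{d-1}^n$ converges in distribution to $Z\sim N(-\tfrac12 l^4\Sigma,\, l^4\Sigma)$. Because $\widetilde R_{d-1}^n$ is independent of $(q_1^n,p_1^n,U^n)$ and $\Delta(x_1^n,h)\to 0$ a.s., Slutsky's lemma yields the joint convergence
\begin{equation*}
\bigl(q_1^n,\,p_1^n,\,U^n,\,R_d^n\bigr)\ \stackrel{\mathcal{L}}{\longrightarrow}\ \bigl(q_1^n,\,p_1^n,\,U,\,Z\bigr),
\end{equation*}
where $U$ is uniform on $[0,1]$, $Z$ is Gaussian as above, and the pair $(U,Z)$ is independent of $(q_1^n,p_1^n)$ with $U$ independent of $Z$.

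Next I pass through the acceptance mechanism. The map $(u,r)\mapsto(u,1\wedge e^{r})$ is continuous, so $a(X^n,Y^n)=1\wedge e^{R_d^n}$ converges jointly with $(q_1^n,p_1^n,U^n)$ to $1\wedge e^{Z}$. The map $(q,p,u,a)\mapsto(q,p,\mathbb{I}_{\,u\le a})$ is continuous off the locus $\{u=a\}$, which carries no mass under the limit law because $U$ has a density and is independent of $Z$, so $\mathbb{P}(U = 1\wedge e^{Z})=0$; the continuous mapping theorem therefore applies and gives
\begin{equation*}
\bigl(q_1^n,\,p_1^n,\,I^n\bigr)\ \stackrel{\mathcal{L}}{\longrightarrow}\ \bigl(q_1^n,\,p_1^n,\,\mathsf{I}\bigr),\qquad \mathsf{I}:=\mathbb{I}_{\,U\le 1\wedge e^{Z}}.
\end{equation*}
The limiting variable $\mathsf{I}$ is independent of $(q_1^n,p_1^n)$ and is Bernoulli with parameter $\mathbb{E}\,[\,1\wedge e^{Z}\,]$; since $a(X^n,Y^n)$ is bounded, this parameter equals $\lim_{d\to\infty}\mathbb{E}\,[\,a(X^n,Y^n)\,] = a(l)$ by Theorem~\ref{thm:lim}. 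Hence $(q_1^n,p_1^n,\mathsf I)$ has exactly the same law as $(q_1^n,p_1^n,\mathsf I^n)$ with $\mathsf I^n=\mathbb{I}_{\,U^n\le a(l)}$ as in the statement.

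Finally I remove the numerical integrator using Condition~\ref{cond:E}: for $d$ large enough that $h\le 1$ one has $|\psith(q_1^n,p_1^n)-\phi_{T}(q_1^n,p_1^n)|\le h^2 E(q_1^n,p_1^n)\to 0$ a.s., so $\mathcal{P}_q\psith(q_1^n,p_1^n)-q_1^n = \mathcal{P}_q\phi_{T}(q_1^n,p_1^n)-q_1^n + o(1)$ a.s.; by Slutsky, $(q_1^n,q_1^{n+1})$ has the same limiting law as $\bigl(q_1^n,\ q_1^n + I^n(\mathcal{P}_q\phi_{T}(q_1^n,p_1^n)-q_1^n)\bigr)$. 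The flow $\phi_{T}$ is continuous in its initial condition, so $(q,p,i)\mapsto\bigl(q,\ q+i(\mathcal{P}_q\phi_{T}(q,p)-q)\bigr)$ is continuous, and applying it to the convergence $(q_1^n,p_1^n,I^n)\stackrel{\mathcal{L}}{\longrightarrow}(q_1^n,p_1^n,\mathsf I)$, together with the law identification of $(q_1^n,p_1^n,\mathsf I)$ above, gives $(q_1^n,q_1^{n+1})\stackrel{\mathcal{L}}{\longrightarrow}(q_1^n,\mathsf q_1^{n+1})$. The main obstacle is the discontinuity of the acceptance indicator $I^n$: one must secure the \emph{joint} distributional limit of $(q_1^n,p_1^n,U^n,R_d^n)$ with the correct independence structure in the limit (in particular the asymptotic independence of the noise $Z$ from $(q_1^n,p_1^n)$, which is precisely what prevents $\{q_1^n\}$ from being asymptotically Markovian) and verify that the limit law puts no mass on $\{U=1\wedge e^{Z}\}$, so that the continuous mapping theorem is legitimate; the central limit input is a routine adaptation of Theorem~\ref{thm:lim} and the removal of the integrator is immediate from Condition~\ref{cond:E}.
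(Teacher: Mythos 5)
Your proof is correct and follows essentially the same route as the paper: you isolate the first particle's energy error $\Delta(x_1^n,h)$ (the paper does this through the modified acceptance ratio $a^{-}$ and indicator $I^{n-}$ of (\ref{eq:aaa})), invoke the CLT underlying Theorem \ref{thm:lim} to obtain a limiting accept decision that is Bernoulli with parameter $a(l)$ and independent of the first particle, and use Condition \ref{cond:E} to replace $\psith$ by the exact flow $\psit$. Your explicit joint-convergence and continuous-mapping bookkeeping (including the check that $\{U = 1\wedge e^{Z}\}$ is null under the limit law) is a more detailed rendering of the paper's comparison $|I^{n-}-I^{n}|\le |\Delta(x_1^n,h)|$ together with the independence of $I^{n-}$ from $(q_1^n,p_1^n)$, so the two arguments coincide in substance.
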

\begin{proof}
See Section \ref{sec:proofs}.
\end{proof}
\noindent This proposition provides a simple description of the
asymptotic behaviour of the one-transition dynamics of the
marginal trajectories of HMC. As $d\rightarrow\infty$, with
probability $a(l)$, the HMC particle moves under the
\emph{correct} Hamiltonian dynamics. However,
the deviation from the true
Hamiltonian dynamics, due to the energy errors 
accumulated from leapfrog integration of
all $d$ particles, gives rise to the alternative
event of staying at the current position $q^{n}$, 
with probability $1-a(l)$.

\section{Optimal tuning of HMC}
\label{sec:optimal}

In the previous section we addressed
the question of how to scale the step-size in the
leapfrog integration in terms of the dimension $d$, leading
to Theorem \ref{thm:lim}.
In this section we refine this analysis and study the
choice of constant $l$ in \eqref{eq:escala}.
Regardless of the metrics  used to measure the
efficiency of the algorithm, a good choice of $l$ in
(\ref{eq:escala}) has to balance the amount of work needed to
simulate a full $T$-leg (interval of length $T$)
of the Hamiltonian dynamics and the
probability of accepting the resulting proposal. Increasing $l$
decreases the acceptance probability but also decreases
the computational cost of each $T$-leg integration;
decreasing $l$ will yield the
opposite effects, suggesting an optimal value
of $l$.  In this section we present an analysis that
avoids the complex calculations typically associated with
the estimation of mixing times of Markov chains,
but still provides useful guidance regarding the choice of $l$.
We provide two alternative ways of doing this, summarized
in Theorems \ref{cor:opt} and Theorem \ref{thm:sjd}.

\subsection{Asymptotically optimal acceptance probability}
\label{sec:opt}

The number of leapfrog steps of length $h$ needed to compute a
proposal is obviously given by $\lceil T/h\rceil $. Furthermore,
at each step of the chain, it is necessary to evaluate $a(X,Y)$
and sample $P$. Thus the computing time for a single proposal will
be
\begin{equation}
\mathsf{C}_{l,d} := \bigl\lceil \frac{T\,d^{1/4}}{l} \bigr\rceil
\cdot d \cdot  C_{LF} + d\cdot C_{O} \ , \label{eq:cost}
\end{equation}
for some constants $C_{LF}$, $C_{O}$ that measure, for one
particle, the leapfrog costs and the overheads. Let
$\mathsf{E}_{l,d}$ denote the expected computing time until the
first accepted $T$-leg,  in stationarity. If $\mathsf{N}$ denotes
the number of proposals  until (and including) the first to be
accepted, then
\begin{equation*}
\mathsf{E}_{l,d} = \mathsf{C}_{l,d}\,\,\E\,[\,\mathsf{N}\,] =
\mathsf{C}_{l,d}\,\,\E\,[\,\E\,[\,\mathsf{N}\,|\,Q\,]\,] =
\mathsf{C}_{l,d}\,\,
\E\,\bigl[\,\,\frac{1}{\E\,[\,a(X,Y)\,|\,Q\,]}\,\,\bigr] \ .
\end{equation*}
Here we have used the fact that, given the locations $Q$, the
number of proposed $T$-legs follows a geometric distribution with
probability of success $\E\,[\,a(X,Y)\,|\,Q\,]$. Jensen's
inequality yields
\begin{equation}
\label{eq:bound} \mathsf{E}_{l,d} \ge
\frac{\mathsf{C}_{l,d}}{\E\,[\,a(X,Y)\,]} =: \mathsf{E}^{*}_{l,d}\
,
\end{equation}
and, from (\ref{eq:cost}) and  Theorem \ref{thm:lim}, we conclude
that:
\begin{equation*}
\lim_{d\rightarrow \infty} d^{-5/4}\times \mathsf{E}^{*}_{l,d} = \frac{T\, C_{LF}}{a(l)\,l} \ .
\end{equation*}
A sensible choice for $l$ is that which  minimizes the asymptotic
cost $\mathsf{E}^{*}_{l,d}$, that is:
\begin{equation*}
l_{opt}=\arg\max_{l>0} \,\,\textrm{eff}(l)\,;\quad \textrm{eff}(l):=a(l)\,l \ .
\end{equation*}
The value of $l_{opt}$ will in general depend on the specific
target distribution under consideration. However, by expressing
$\textrm{eff}$ as a function of $a=a(l)$, we may write
\begin{equation}
\label{eq:auxiliar} \textrm{eff}=
\bigl(\frac{\sqrt{2}}{\Sigma^{\frac{1}{4}}}\bigr)\cdot  a\cdot
\bigl(\Phi^{-1}\bigl(1-\frac{a}{2}\bigr)\bigr)^{\frac{1}{2}}
\end{equation}
and this equality makes it apparent that  $a(l_{opt})$ {\em does
not vary with the selected target.} 
Fig.\@\ref{fig:opt} illustrates the mapping $a\mapsto
\textrm{eff}(a)$; different choices of target distribution only
change the vertical scale.
In summary, we have:
\begin{thm}
\label{cor:opt} Under the hypotheses of Theorem \ref{thm:lim} and
as $d\rightarrow\infty$, the measure of cost
$\mathsf{E}^{*}_{l,d}$ defined in (\ref{eq:bound}) is minimised
for the choice  $l_{opt}$ of $l$ that leads to the value of
$a=a(l)$ that maximises (\ref{eq:auxiliar}). Rounded to 3 decimal
places the, target independent, optimal value of the limit
probability $a$ is
\begin{equation*}
a(l_{opt}) = 0.651\ .
\end{equation*}
\end{thm}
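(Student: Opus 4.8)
The plan is to reduce the optimization entirely to the single-variable problem encoded in \eqref{eq:auxiliar}. By Theorem \ref{thm:lim} we have $a(l) = 2\Phi(-l^2\sqrt{\Sigma}/2)$, so the map $l \mapsto a(l)$ is a smooth strictly decreasing bijection from $(0,\infty)$ onto $(0,1)$; hence maximizing $\mathrm{eff}(l) = a(l)\, l$ over $l > 0$ is equivalent to maximizing the right-hand side of \eqref{eq:auxiliar} over $a \in (0,1)$. First I would derive \eqref{eq:auxiliar} cleanly: from $a = 2\Phi(-l^2\sqrt{\Sigma}/2)$ we get $\Phi^{-1}(1 - a/2) = l^2\sqrt{\Sigma}/2$ (using $\Phi(-x) = 1-\Phi(x)$), hence $l = \sqrt{2}\,\Sigma^{-1/4}\,(\Phi^{-1}(1-a/2))^{1/2}$, and substituting into $\mathrm{eff} = a\,l$ gives exactly the displayed formula. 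The prefactor $\sqrt{2}\,\Sigma^{-1/4}$ is a positive constant depending only on the target, so it does not affect the location of the maximizer in $a$; this is the precise sense in which $a(l_{opt})$ is target-independent.

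Next I would show the maximizer exists and is unique. Writing $g(a) := a\,\bigl(\Phi^{-1}(1-a/2)\bigr)^{1/2}$ on $(0,1)$, note $g \to 0$ as $a \to 0$ (linear factor) and as $a \to 1$ (since $\Phi^{-1}(1/2) = 0$), while $g > 0$ on the interior, so a maximizer exists by continuity and compactness of a suitable closed subinterval. For uniqueness and for locating it, I would set $\psi := \Phi^{-1}(1 - a/2)$, equivalently $a = 2\Phi(-\psi)$ for $\psi \in (0,\infty)$, and rewrite the objective as $h(\psi) := 2\Phi(-\psi)\,\psi^{1/2}$. Differentiating, $h'(\psi) = 2\bigl(-\varphi(\psi)\,\psi^{1/2} + \Phi(-\psi)\,\tfrac{1}{2}\psi^{-1/2}\bigr) = \psi^{-1/2}\bigl(\Phi(-\psi) - 2\psi\,\varphi(\psi)\bigr)$, where $\varphi$ is the standard normal density. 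So the critical-point condition is
\begin{equation*}
\Phi(-\psi) = 2\psi\,\varphi(\psi),
\end{equation*}
a transcendental equation in the single unknown $\psi$. I would argue the root is unique by checking that $\psi \mapsto \Phi(-\psi) - 2\psi\varphi(\psi)$ is positive for small $\psi$ (it tends to $1/2$ as $\psi \to 0^+$), negative for large $\psi$, and has a single sign change — the latter follows from a monotonicity/log-derivative argument on the ratio $\Phi(-\psi)/(\psi\varphi(\psi))$, or simply from the fact that the left side decays like a Gaussian tail while $2\psi\varphi(\psi)$ eventually dominates then both decay, so one tracks the ratio's derivative. Solving numerically (e.g.\@ as in \cite{isba:00}, or with any root-finder, analogous to the $0.234$ and $0.574$ computations of \cite{robe:97,robe:98}) gives $\psi^\star \approx 1.0$-ish, and then $a(l_{opt}) = 2\Phi(-\psi^\star) = 0.651$ to three decimal places.

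The main obstacle is the uniqueness of the critical point: one must rule out multiple sign changes of $\Phi(-\psi) - 2\psi\varphi(\psi)$. The cleanest route is to observe that at any critical point $\psi^\star$ one has $\Phi(-\psi^\star) = 2\psi^\star\varphi(\psi^\star)$, and then compute the second derivative $h''$ at such a point; using the relation to eliminate $\Phi(-\psi^\star)$ one finds $h''(\psi^\star)$ has a definite (negative) sign, so every critical point is a strict local max, which for a smooth function on an interval vanishing at both endpoints forces uniqueness. This is a short computation: $h''(\psi) = \psi^{-1/2}\bigl(-\varphi(\psi) - 2\varphi(\psi) + 2\psi^2\varphi(\psi)\bigr) - \tfrac12\psi^{-3/2}\bigl(\Phi(-\psi) - 2\psi\varphi(\psi)\bigr)$, and at a critical point the second bracket vanishes, leaving $h''(\psi^\star) = \psi^{\star -1/2}\varphi(\psi^\star)\bigl(2\psi^{\star 2} - 3\bigr)$; since the unique candidate root satisfies $\psi^\star < \sqrt{3/2}$ (which one verifies by evaluating $\Phi(-\psi) - 2\psi\varphi(\psi)$ at $\psi = \sqrt{3/2}$ and finding it negative, so the root lies to the left), we get $h''(\psi^\star) < 0$. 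This closes the argument and pins down $a(l_{opt}) = 0.651$.
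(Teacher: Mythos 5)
Your proposal is correct, and its core is the same reduction the paper uses: invert $a=a(l)=2\Phi(-l^{2}\sqrt{\Sigma}/2)$ to get $l=\sqrt{2}\,\Sigma^{-1/4}\bigl(\Phi^{-1}(1-a/2)\bigr)^{1/2}$, substitute into $\mathrm{eff}=a\,l$ to obtain (\ref{eq:auxiliar}), and observe that the target enters only through the positive prefactor $\sqrt{2}\,\Sigma^{-1/4}$, so the maximising $a$ is target independent; the paper then simply reads off the maximiser numerically (Fig.~\ref{fig:opt}), having already identified minimisation of $\mathsf{E}^{*}_{l,d}$ with maximisation of $a(l)\,l$ via the limit $d^{-5/4}\mathsf{E}^{*}_{l,d}\to T\,C_{LF}/(a(l)\,l)$ in the text preceding the theorem (a step you take as given rather than re-derive — acceptable, but it is part of the theorem's claim). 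What you add, and the paper does not supply, is a genuine existence/uniqueness argument for the maximiser: with $\psi=\Phi^{-1}(1-a/2)$ the objective becomes $h(\psi)=2\Phi(-\psi)\psi^{1/2}$, the stationarity condition is $\Phi(-\psi)=2\psi\phi(\psi)$, and at any critical point $h''(\psi^{\star})=\psi^{\star\,-1/2}\phi(\psi^{\star})(2\psi^{\star 2}-3)<0$, forcing uniqueness; your calculus here checks out. Two small repairs: (i) to conclude that every root of $g(\psi):=\Phi(-\psi)-2\psi\phi(\psi)$ lies below $\sqrt{3/2}$, note that $g'(\psi)=\phi(\psi)(2\psi^{2}-3)$, so $g$ is increasing on $(\sqrt{3/2},\infty)$ with limit $0$ at infinity and hence strictly negative there (this also gives uniqueness directly, since $g$ is strictly decreasing from $1/2$ on $(0,\sqrt{3/2})$); (ii) the root is $\psi^{\star}\approx 0.45$, not ``$\approx 1.0$-ish'' — indeed $a(l_{opt})=2\Phi(-\psi^{\star})\approx 0.651$ requires $\Phi(-\psi^{\star})\approx 0.326$, i.e.\ $\psi^{\star}\approx 0.452$ — so your final value agrees with the paper even though the intermediate estimate is off.
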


\begin{figure}
\includegraphics[height=7cm, angle=0]{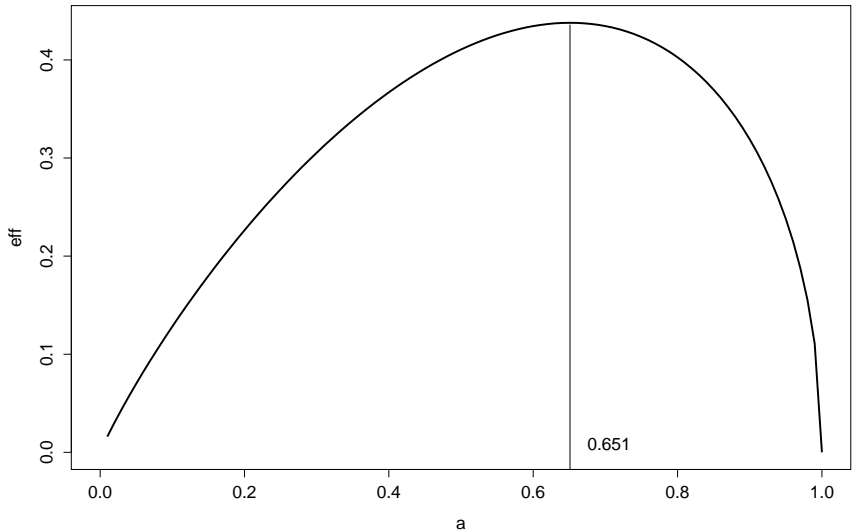}
\caption{The efficiency function $\textrm{eff}=\textrm{eff}(a)$.}
\label{fig:opt}
\end{figure}

The optimal value identified in the preceding theorem is based on
the quantity $\mathsf{E}^{*}_{l,d}$ that underestimates the
expected number of proposals. It may be assumed that the practical
optimal average acceptance probability is in fact {\em greater
than} or equal to 0.651. In the next subsection we use an
alternative measure of efficiency: the expected squared jumping
distance.  Consideration of this alternative metric will also lead
to the same asymptotically optimal acceptance probability of
precisely 0.651 as did the minimisation of $\mathsf{E}^{*}_{l,d}$.
This  suggests that, as $d\rightarrow\infty$, the consequences of
the fact that $\mathsf{E}^{*}_{l,d}$ underestimates
$\mathsf{E}_{l,d}$ become negligible; proving analytically such a
conjecture  seems hard given our current understanding 
of the limiting HMC dynamics.

\subsection{Squared jumping distance}
We now consider the chain $Q^{0},Q^{1},\ldots$ in stationarity
(\emph{i.e.} $Q^{0}\sim \Pi(Q)$) and  account for  the computing
cost $\mathsf{C}_{l,d}$ in (\ref{eq:cost}) by introducing the
continuous-time process $Q^{N(t)}$, where $\{N(t);t\ge 0\}$
denotes a Poisson process of intensity $ \lambda_d  =
1/\mathsf{C}_{l,d} $. If $q_d(t) := q_1^{N(t)}$ denotes the
projection of $Q^{N(t)}$ onto the first particle and $\delta > 0$
is a parameter (the jumping time), we measure the efficiency of
HMC algorithms by using the expected squared jump distance:
\begin{equation*}
\SJ_d(\delta) = \E\,[\,(q_d(t+\delta)-q_d(t))^2\,] \ .
\end{equation*}

The following result shows that $\SJ_d(\delta)$ is indeed
asymptotically maximized by maximizing $a(l)\,l$:
\begin{thm}
\label{thm:sjd} Under the hypotheses of Proposition \ref{th:desplaz}:
\begin{equation*}
\lim_{d\rightarrow\infty} d^{5/4}\times \SJ_{d} =
\frac{C_J\,\delta}{T\,C_{LF}}\times a(l)\,l \ .
\end{equation*}
\end{thm}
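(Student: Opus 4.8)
The plan is to compute $\SJ_d(\delta)$ by conditioning on the number of jumps of the Poisson clock in a time window of length $\delta$, then pass to the limit $d\to\infty$ using the scaling $\lambda_d=1/\mathsf{C}_{l,d}$, which forces only the one-jump contribution to survive. First I would write, by stationarity and the Markov property,
\begin{equation*}
\SJ_d(\delta) = \sum_{k\ge 0} \p[N(\delta)=k]\,\E\,[\,(q_1^{k}-q_1^{0})^2\,] \ ,
\end{equation*}
where the expectation is in stationarity, $Q^0\sim\Pi$. Since $\lambda_d = 1/\mathsf{C}_{l,d}$ and $\mathsf{C}_{l,d}\sim T\,C_{LF}\,d^{5/4}/l$ by \eqref{eq:cost}, we have $\lambda_d\delta \sim l\delta/(T\,C_{LF}\,d^{5/4})\to 0$, so $\p[N(\delta)=1]=\lambda_d\delta\,e^{-\lambda_d\delta}\sim l\delta/(T\,C_{LF})\cdot d^{-5/4}$, while $\p[N(\delta)\ge 2]=\mathcal O(\lambda_d^2)=\mathcal O(d^{-5/2})$. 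Hence, multiplying by $d^{5/4}$,
\begin{equation*}
d^{5/4}\,\SJ_d(\delta) = d^{5/4}\,\p[N(\delta)=1]\,\E\,[\,(q_1^{1}-q_1^{0})^2\,] + d^{5/4}\sum_{k\ge 2}\p[N(\delta)=k]\,\E\,[\,(q_1^{k}-q_1^{0})^2\,] \ .
\end{equation*}

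The leading term converges: $d^{5/4}\,\p[N(\delta)=1]\to l\delta/(T\,C_{LF})$, and $\E\,[\,(q_1^{1}-q_1^{0})^2\,]\to C_J\cdot a(l)$ by Proposition \ref{th:desplaz}, giving exactly $C_J\,\delta\,a(l)\,l/(T\,C_{LF})$. So the crux is to show the tail sum, once multiplied by $d^{5/4}$, vanishes. For this I would bound $\E\,[\,(q_1^{k}-q_1^{0})^2\,]$ uniformly in $k$ and $d$: writing $q_1^k-q_1^0$ as a telescoping sum of at most $k$ single-transition displacements, each of the form $I^{n}(\mathcal P_q\psith(q_1^n,p_1^n)-q_1^n)$ as in \eqref{eq:q}, one gets $\E\,[\,(q_1^{k}-q_1^{0})^2\,]\le k^2\,\sup_n\E\,[\,(\mathcal P_q\psith(q_1^n,p_1^n)-q_1^n)^2\,]$ by Cauchy–Schwarz, and the latter supremum is bounded uniformly in $d$ (and in $h\le 1$) using Condition \ref{cond:E} together with the finite-fourth-moment assumption on $\exp(-V)$, since $\mathcal P_q\psith(q,p)-q = (\mathcal P_q\phi_T(q,p)-q) + \mathcal O(h^2 E(x))$. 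Then
\begin{equation*}
d^{5/4}\sum_{k\ge 2}\p[N(\delta)=k]\,\E\,[\,(q_1^{k}-q_1^{0})^2\,] \le d^{5/4}\,K\,\E\,[\,N(\delta)^2\,\mathbb I_{N(\delta)\ge 2}\,] \ ,
\end{equation*}
and $\E\,[\,N(\delta)^2\,\mathbb I_{N(\delta)\ge 2}\,] = \mathcal O(\lambda_d^2) = \mathcal O(d^{-5/2})$ for a Poisson variable with small mean, so the whole quantity is $\mathcal O(d^{-5/4})\to 0$.

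I expect the main obstacle to be the uniform-in-$d$ control of the single-transition second moment $\E\,[\,(\mathcal P_q\psith(q_1,p_1)-q_1)^2\,]$ in stationarity; although Condition \ref{cond:E} supplies an $h$-uniform, integrable bound on the integration error $\psith(x)-\phi_T(x)$, one must combine it carefully with the bound on $\E[(\mathcal P_q\phi_T(q,p)-q)^2]=C_J<\infty$ and check that the cross terms are controlled via Cauchy–Schwarz using the fourth-moment hypothesis on $\exp(-V)$ — this is essentially the same estimate already needed in the proof of Proposition \ref{th:desplaz}, so it can be quoted or lightly adapted. The remaining steps (the Poisson tail estimates and the telescoping/Cauchy–Schwarz bound in $k$) are routine. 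A final remark: since the indicator $I^n$ only decreases the squared displacement, one does not even need to track the dependence of the acceptance on the full configuration in the tail — the crude bound $I^n\le 1$ suffices there, and $a(l)$ enters only through the $k=1$ term via Proposition \ref{th:desplaz}.
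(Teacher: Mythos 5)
Your proposal is correct and follows essentially the same route as the paper: condition on the number of Poisson events in the window $\delta$, let the single-jump term produce $d^{5/4}\lambda_d\delta\,\E[(q_1^{1}-q_1^{0})^2]\to \frac{l\delta}{T\,C_{LF}}\,C_J\,a(l)$ via Proposition \ref{th:desplaz}, and kill the multi-jump tail by a telescoping/Cauchy--Schwarz bound of order $k^2$ times an $O(1)$ single-transition second moment together with $\lambda_d=O(d^{-5/4})$. The only cosmetic difference is that the paper obtains the uniform $O(1)$ bound on the one-step displacement directly from Proposition \ref{th:desplaz} (in stationarity), whereas you re-derive it from Condition \ref{cond:E} and the fourth-moment hypothesis; both are valid.
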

\begin{proof}
See Section \ref{sec:proofs}.
\end{proof}
\subsection{Optimal acceptance probability in practice}
\label{sec:practical}
\begin{figure}[!]
\psfrag{abs}[][]{$f(q)=|q|$} \psfrag{xxx}[][]{$f(q)=q^{3}$}
\psfrag{xx}[][]{$f(q)=q^{2}$} \psfrag{x}[][]{$f(q)=q$}
\includegraphics[height=20cm, angle=0]{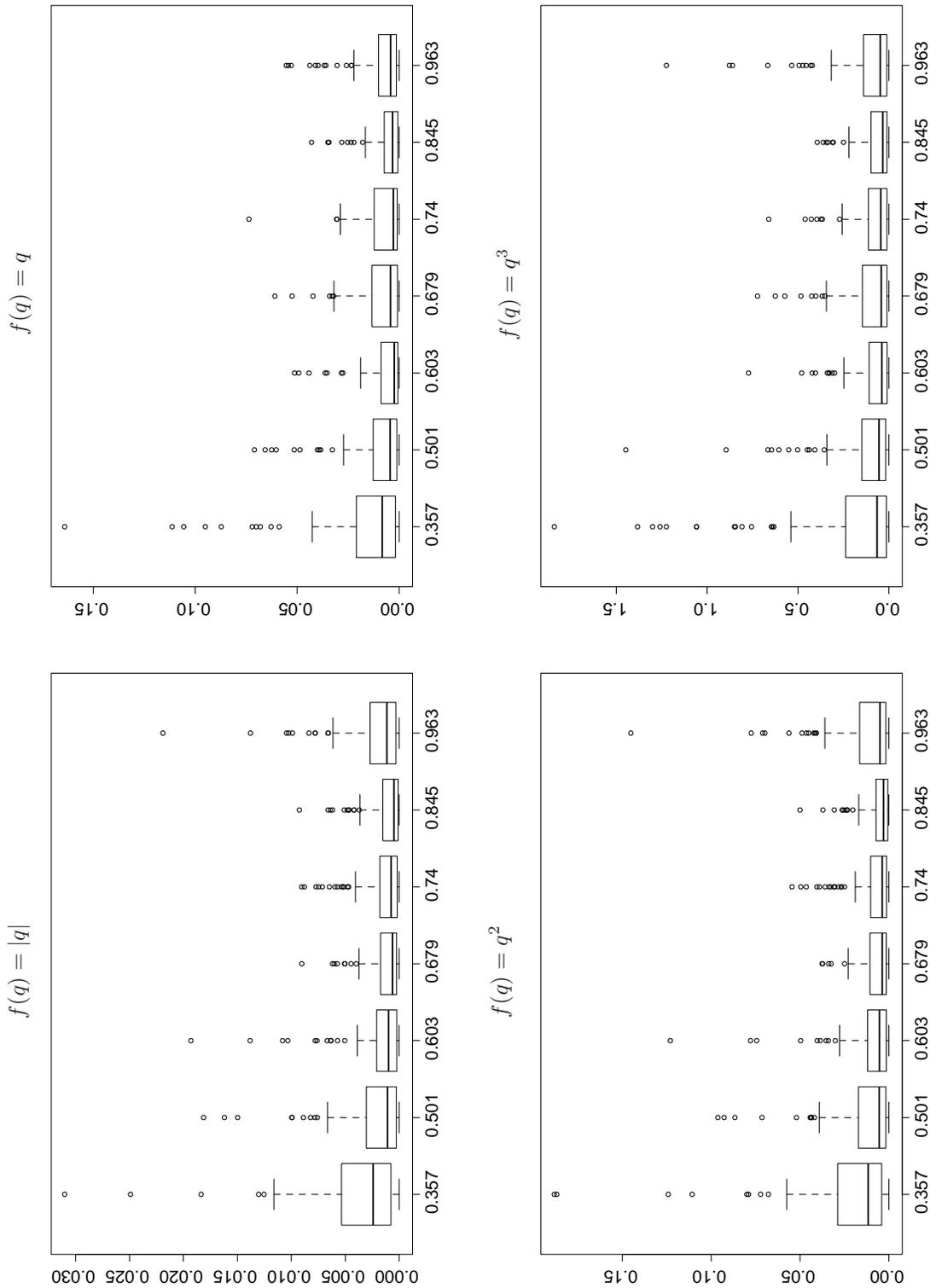}
\caption{Boxplots of Squared Errors (SEs) from Monte-Carlo
averages of HMC. For $7$ different selections of the leapfrog
step-size $h$ (corresponding to the different boxplots in each
panel); the values of $h$ are not shown. We ran HMC $120$ times;
every run was allowed a computing time of 30s. Each boxplot
corresponds to the $120$ SEs in estimating $\E\,[\,f(q)\,]$, for a
particular $h$ and $f(\cdot)$. Written at the bottom of each
boxplots is the median of the $120$ empirical average acceptance
probabilities for the corresponding $h$. } \label{fig:num}
\end{figure}

As $d\rightarrow\infty$, the computing time required for a
proposal scales as $1/l$ (see (\ref{eq:cost})) and the number of
proposals that may be performed in a given amount of time scales
as $l$. Inspection of (\ref{eq:cost}) reveals however that
selecting a big value of $l$ gives the full benefit of a
proportional increase of the number of proposals only
asymptotically, and at the slow rate of $\mathcal{O}(d^{-1/4})$.
On the other hand, the average acceptance probability converges at
the faster rate $\mathcal{O}(d^{-1/2})$ (this is an 
application of Stein's method). These
considerations suggest that unless
$d^{-1/4}$ is  very small the
algorithm will tend to benefit from average acceptance
probabilities higher than $0.651$.

Fig.\@\ref{fig:num} shows the results of a numerical study on HMC.
The target distribution is a product of $d= 10^{5}$ standard
Gaussian densities $N(0,1)$. We have  applied HMC with different
choices of the step-size $h$ and, in all cases,  allowed the
algorithm to run during a computational time $t_{comp}$ of 30
seconds. We used Monte-Carlo averages of the output
\begin{equation*}
\hat{f}= \frac{1}{N_{t_{comp}}}\sum_{n=1}^{N_{t_{comp}}}
f(q_1^{n})
\end{equation*}
to estimate, for different choices of $f$, the expectation
$\E\,[\,f\,]=\E\,[\,f(q)\,]$, $q\sim N(0,1)$; here $N_{t_{comp}}$
denotes the number of $T$-legs carried out within the allowed time
$t_{comp}$. For each choice of $h$ we ran the HMC algorithm $120$
times.

Each of the four panels in Fig.\@\ref{fig:num} corresponds to a
different choice of $f(\cdot)$. In each of the panels, the various
boxplots correspond to  choices of $h$; at the bottom of each
boxplot we have written the median of the $120$ empirical average
acceptance probabilities. The boxplots themselves use the $120$
realizations of the squared distances: $ (\hat{f} -
\E\,[\,f\,])^{2}$. The shape of the boxplots  endorses the point
made above, that the optimal acceptance probability for large (but
finite) $d$ is larger than the asymptotically optimal value of
0.651.
%
%
\section{Estimates for the leapfrog algorithm}
\label{sec:energyver}
In this section we identify  hypotheses on $V$ under which
Conditions \ref{cond:delta}, \ref{cond:beta} and \ref{cond:E}
in Section \ref{sec:limit} hold.

We set $f :=
-\nabla {V}$ (the `force') and denote by $f'(q):= f^{(1)}(q),
f^{(2)}(q),\dots$ the successive Fr\'echet derivatives of $f$ at
$q$. Thus, at a fixed $q$, $f^{(k)}(q)$ is a multilinear operator
from $(\bbR^m)^{k+1}$ to $\bbR$. For the rest of this section we
will use the following assumptions on ${V}$:

\begin{assum} \label{assum:1}
The function $V:\bbR^m \to \bbR$ satisfies:
\begin{itemize}
\item (i) ${V} \in C^4(\bbR^{m}\to \bbR_+).$
\item (ii) $f^{'}, f^{(2)}, f^{(3)}$ are uniformly bounded by a constant $B$.
\end{itemize}
\end{assum}

These assumptions  imply that the potential ${V}(q)$ can grow at
most quadratically at infinity as $|q| \rightarrow \infty$. (If
the growth of $V$ is more than quadratic, then the leapfrog
algorithm as applied with a constant value of $h$ throughout the
phase space is in fact unstable whenever the initial condition is
large.) The case where $V$ takes negative values but is bounded
from below can be reduced to the case $V\geq 0$ by adding a
suitable constant to $V$. In terms of the target measure this just
involves changing the normalization constant and hence is
irrelevant in the HMC algorithm.

\subsection{Preliminaries}
Differentiating \eqref{eqn:ham2} with respect to $t$, we
find successively:
\begin{align*}
\ddot{p}(t) &= f'(q(t)) M^{-1}p(t) \ , \\
\ddot{q}(t) &= M^{-1} f(q(t))\ ,\\
\dddot{p}(t)&= f^{(2)}(q(t))(M^{-1}p(t), M^{-1}p(t)) + f'(q(t))M^{-1} f(q(t)) \ ,\\
\dddot{q}(t) &= M^{-1} f'(q(t))M^{-1}p(t)\ ,\\
\ddddot{p}(t)&= f^{(3)}(q(t))(M^{-1}p(t), M^{-1}p(t), M^{-1}p(t)) + \\
&\quad 3f^{(2)}(q(t))(M^{-1} f(q(t)), M^{-1} p(t))
+ f'(q(t))M^{-1} f'(q(t)) M^{-1} f(q(t))\ ,\\
\ddddot{q}(t) &= M^{-1} f^{(2)}(q(t))(M^{-1}p(t), M^{-1}p(t)) + M^{-1} f'(q(t))M^{-1} f(q(t))\ .
\end{align*}
In this section the letter $K$ will denote a generic constant
which may vary from one appearance to the next, but will depend
only on $B$, $T$, $\|M\|$, $\|M^{-1}\|$. From the above equations
for the derivatives and using the assumptions on $V$, we obtain
the following bounds:

\begin{equation}
\begin{aligned}
|\dot{p}(t)| &\leq |f(q(t))|\ , &|\dot{q}(t)|&\leq K|p(t)|\ , \\
|\ddot{p}(t)| &\leq K|p(t)|\ , &|\ddot{q}(t)| &\leq K|f(q(t))|\ , \\
|\dddot{p}(t)|&\leq K(|p(t)|^2 + |f(q(t))|)\ , &|\dddot{q}(t)| &\leq K|p(t)|\ , \\
|\ddddot{p}(t)|&\leq K(|p(t)|^3 + |p(t)||f(q(t))| + |f(q(t))|)\ , &|\ddddot{q}(t)| &\leq K(|p(t)|^2 + |f(q(t))|) \ .
\end{aligned}
\label{eqn:pqd1}
\end{equation}

\subsection{Asymptotic expansion for the leapfrog solution}
In previous sections we have used a subscript
to denote the different particles comprising our
state space. Here we consider leapfrog integration
of a single particle and use the subscript to denote
the time-level in this integration.
The leapfrog scheme can then be compactly written as
\begin{align}
q_{n+1}  &= q_{n} + h M^{-1}p_n + \frac{h^2}{2} M^{-1} f(q_n)\ , \label{eqn:verlq} \\
p_{n+1} &= p_{n} + \frac{h}{2} f(q_n) + \frac{h}{2} f\Big(q_n + h
M^{-1}p_n + \frac{h^2}{2} M^{-1}f(q_n)\Big)\ .\label{eqn:verlp}
\end{align}

We define the truncation error in the usual way:
\begin{align*}
-\tau^{(q)}_n &:=
q(t_{n+1})- \Big(q(t_n) + hM^{-1}p(t_n) + \frac{h^2}{2} M^{-1} f(q(t_n))\Big)\ ,\\
-\tau^{(p)}_n &:= p(t_{n+1})- \Big(p(t_n) + \frac{h}{2}f(q_n) +
\frac{h}{2} f\big(q(t_n) +hM^{-1}p(t_n) + \frac{h^2}{2}M^{-1}
f(q(t_n)\big)\Big) \ ,
\end{align*}
where we have set $t_n=n h\in[0,T]$.
Expanding (see \cite{Hair:etal:06}) we obtain:
\begin{align*}
\tau^{(q)}_n &= \frac{1}{6}\, h^3\,\dddot{q}(t_n) + h^4\,\mathcal{O}(\|\ddddot{q}(\cdot)\|_{\infty})\ , \\
\tau^{(p)}_n &= -\frac{1}{12}\,h^3 \,\dddot{p}(t_n) +
h^4\,\mathcal{O}(\|\ddddot{p}(\cdot)\|_{\infty}) + h\,\mathcal{O}(\tau^{(q)}_n)\ ,
\end{align*}
where, for arbitrary function $g$: 
\begin{equation*}
\|g(\cdot)\|_{\infty} := \sup_{0 \leq t \leq T} |g(t)|\ .
\end{equation*}
In
view of these estimates, $ (1/6)\,h^3\dddot{q}(t_n)$ and $-(1/12)\,
h^3\,\dddot{p}(t_n)$ are the leading terms in the asymptotic
expansion of the truncation error. Standard results (see, for
instance, \cite{Hair:etal:87}, Section II.8) show that
the numerical solution possesses an asymptotic expansion:
\begin{equation}\label{eqn:asymexh3}
\begin{aligned}
q_{n}  &= q(\tn) + h^2 v(\tn) + \mathcal{O}(h^3)\ ,  \\
p_{n} & = p(\tn)  + h^2 u(\tn) + \mathcal{O}(h^3)\ ,
\end{aligned}
\end{equation}
where functions $u(\cdot)$ and $v(\cdot)$ are the solutions,
with initial condition \mbox{$u(0) = v(0) = 0$}, of the \emph{variational} system
\begin{align}\label{eqn:uvode}
\left( \begin{matrix}  \dot{u}(t) \\ \dot{v}(t) \end{matrix} \right)&= \left( \begin{matrix}   0  & M^{-1}f'(q(t))
\\ I & 0 \end{matrix} \right)\left( \begin{matrix}  u(t) \\ v(t) \end{matrix} \right)
+ \left( \begin{matrix}  \frac{1}{12} \dddot{p}(t) \\ -\frac{1}{6} \dddot{q}(t) \end{matrix}
\right).
\end{align}
\begin{rem}
Notice here that $u(\cdot), v(\cdot)$ depend on the initial conditions
$(q(0), p(0))$ via $(q(\cdot),p(\cdot))$ but this dependence is
not reflected in the notation. One should keep in mind that most of the norms 
appearing in the sequel are functions of $(q(0), p(0))$.
\end{rem}
\noindent Applying Gronwall's lemma and using
the estimates \eqref{eqn:pqd1}, we obtain the bound:
\begin{align} \label{eqn:uvmaxbd}
\|u(\cdot)\|_{\infty} + \|v(\cdot)\|_{\infty} \leq
K  (\|p(\cdot)\|^2_{\infty} +\|f(q(\cdot))\|_{\infty})
\end{align}
and, by differentiating  \eqref{eqn:uvode} with respect to $t$,
expressing $\dot{u}, \dot{v}$ in terms of $u,v$, and using
\eqref{eqn:pqd1} again, we obtain in turn:
\begin{align}
\|\ddot{u}(\cdot)\|_{\infty}  &
\leq K (\|p(\cdot)\|^3_{\infty} +\|p(\cdot)\|_{\infty}\|f(q(\cdot))\|_{\infty}
+\|f(q(\cdot))\|_{\infty})\ ,\label{eqn:nv1} \\
\|\ddot{v}(\cdot)\|_{\infty}  & \leq K (\|p(\cdot)\|^2_{\infty} +
\|f(q(\cdot))\|_{\infty})\ .\label{eqn:nv2}
\end{align}
\subsection{Estimates for the global error}
With the leading coefficients $u$, $v$ of the global errors $q_{n}- q(\tn)$, 
$p_{n} - p(\tn)$ estimated in (\ref{eqn:uvmaxbd}), our
task now is to obtain an explicit bound for the constants implied
in the $\mathcal{O}(h^3)$ remainder in \eqref{eqn:asymexh3}. To this end,
we define the quantities
\begin{align*}
z_n := q(t_n) + h^2 v(t_n)\ ,\\
w_n  := p(t_n) + h^2 u(t_n)\ ,
\end{align*}
and denote by $\tau^{(q)*}_n$, $\tau^{(p)*}_n$ the residuals they generate when substituted in
\eqref{eqn:verlq}, \eqref{eqn:verlp} respectively, \textit{i.e.,}
\begin{align*}
-\tau^{(q)*}_n &= z_{n+1}  - z_n - h M^{-1} w_n - \frac{h^2}{2}M^{-1} f(z_n)\ , \\
-\tau^{(p)*}_n &= w_{n+1} - w_{n} - \frac{h}{2} f(z_n) - \frac{h}{2} f\Big(z_n+ h M^{-1} w_n + \frac{h^2}{2} M^{-1} f(z_n) \Big)\ .
\end{align*}
Since the leapfrog scheme is stable, we have
\begin{align}\label{eqn:maxrecurbd}
\max_{0\leq t_n \leq T}(|q_n - z_n| + |p_n - w_n|) \leq \frac{C}{h} \max_{0\leq t_n \leq T}(|\tau^{(q)*}_n| + |\tau^{(p)*}_n|)
\end{align}
with the constant $C$ depending only on $T$ and Lipschitz constant
of the map $(q_n,p_n) \mapsto (q_{n+1},p_{n+1})$, which in turn
depends on $\|M^{-1}\|$ and the bound for $f'$. The stability
bound (\ref{eqn:maxrecurbd}) is the basis of the proof of the
following estimation of the global error:

\begin{prop} \label{thm:errglobfin}If the potential $V$ satisfies
	Assumptions \ref{assum:1}, then for $0 \leq t_n \leq T$,
\begin{align*}
|p_n - \big(p(t_n) + h^2 u(t_n)\big)| &\leq Kh^3 (\|p(\cdot)\|^4_{\infty}  + \|f(q(\cdot))\|^2_{\infty}+1)\ ,\\
|q_n - \big(q(t_n) + h^2v(t_n)\big)|  &\leq
Kh^3(\|p(\cdot)\|^4_{\infty}  + \|f(q(\cdot))\|^2_{\infty} + 1)\ .
\end{align*}
\end{prop}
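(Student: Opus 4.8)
The whole argument reduces to bounding the two residuals $\tau^{(q)*}_n$ and $\tau^{(p)*}_n$ (defined just above the statement) by $\mathcal{O}(h^4)$, with the implicit constant depending on $\|p(\cdot)\|_\infty$ and $\|f(q(\cdot))\|_\infty$ in the advertised way, and then invoking the stability estimate \eqref{eqn:maxrecurbd}. Indeed, once we know $\max_{0\le t_n\le T}\bigl(|\tau^{(q)*}_n|+|\tau^{(p)*}_n|\bigr)\le Kh^4(\|p(\cdot)\|^4_\infty+\|f(q(\cdot))\|^2_\infty+1)$, the estimate \eqref{eqn:maxrecurbd} divides by $h$ and gives exactly $\max(|q_n-z_n|+|p_n-w_n|)\le Kh^3(\|p(\cdot)\|^4_\infty+\|f(q(\cdot))\|^2_\infty+1)$, which is the claim since $z_n=q(t_n)+h^2v(t_n)$ and $w_n=p(t_n)+h^2u(t_n)$. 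So the work is entirely a Taylor-expansion bookkeeping for the leapfrog one-step map \eqref{eqn:verlq}--\eqref{eqn:verlp} evaluated at the perturbed exact solution $(z_n,w_n)$.

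First I would Taylor expand $z_{n+1}=q(t_{n+1})+h^2v(t_{n+1})$ and $w_{n+1}=p(t_{n+1})+h^2u(t_{n+1})$ about $t_n$: the $q$ and $p$ pieces to fourth order in $h$ with Lagrange remainders governed by $\|\ddddot q(\cdot)\|_\infty$ and $\|\ddddot p(\cdot)\|_\infty$, and the $v,u$ pieces to second order (hence, after the $h^2$ factor, to order $h^4$) with remainders governed by $\|\ddot v(\cdot)\|_\infty$ and $\|\ddot u(\cdot)\|_\infty$. Then I would substitute the derivative identities $\dot q=M^{-1}p$, $\ddot q=M^{-1}f(q)$, $\dddot q=M^{-1}f'(q)M^{-1}p$ and their $p$-counterparts, together with the variational system \eqref{eqn:uvode} for $\dot u,\dot v$, and would expand the two force evaluations $f(z_n)$ and $f\bigl(z_n+hM^{-1}w_n+\tfrac{h^2}{2}M^{-1}f(z_n)\bigr)$ in powers of $h$ about $q(t_n)$; for instance $f(z_n)=f(q(t_n))+h^2f'(q(t_n))v(t_n)+\mathcal{O}\bigl(h^4|v(t_n)|^2\bigr)$, using $|z_n-q(t_n)|=h^2|v(t_n)|$ and the bounds on $f',f^{(2)}$ from Assumptions \ref{assum:1}. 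Collecting terms, the contributions through order $h^2$ vanish because the leapfrog scheme has local truncation error $\mathcal{O}(h^3)$, and the order-$h^3$ contribution vanishes precisely because $(u,v)$ solves \eqref{eqn:uvode} with forcing $\bigl(\tfrac{1}{12}\dddot p,-\tfrac16\dddot q\bigr)$ matched to the leading truncation-error terms recorded just before \eqref{eqn:asymexh3}. What remains is $\mathcal{O}(h^4)$, with coefficients built from $\|\ddddot q(\cdot)\|_\infty$, $\|\ddddot p(\cdot)\|_\infty$, $\|\ddot u(\cdot)\|_\infty$, $\|\ddot v(\cdot)\|_\infty$, $|v(t_n)|^2$, and from products of bounded $f$-derivatives with $|u(t_n)|,|v(t_n)|$; one also uses the $h\,\mathcal{O}(\tau^{(q)*}_n)$-type coupling visible in the definition of $\tau^{(p)*}_n$.

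It then remains to insert the a priori bounds already in hand: \eqref{eqn:uvmaxbd} for $\|u(\cdot)\|_\infty+\|v(\cdot)\|_\infty$, \eqref{eqn:nv1} and \eqref{eqn:nv2} for $\|\ddot u(\cdot)\|_\infty,\|\ddot v(\cdot)\|_\infty$, and \eqref{eqn:pqd1} for $\|\ddddot q(\cdot)\|_\infty,\|\ddddot p(\cdot)\|_\infty$. Each of these is dominated by $K(\|p(\cdot)\|^2_\infty+\|f(q(\cdot))\|_\infty)$ or by $K(\|p(\cdot)\|^3_\infty+\|p(\cdot)\|_\infty\|f(q(\cdot))\|_\infty+\|f(q(\cdot))\|_\infty)$, and hence by $K(\|p(\cdot)\|^4_\infty+\|f(q(\cdot))\|^2_\infty+1)$ after using $ab\le a^2+b^2$ and $x^3\le x^4+1$. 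The quartic term $\|p(\cdot)\|^4_\infty$ in the statement arises specifically from squaring $|v(t_n)|\le K(\|p(\cdot)\|^2_\infty+\|f(q(\cdot))\|_\infty)$ inside the Taylor remainder of $f(z_n)$ (and from analogous products). This yields the desired $\mathcal{O}(h^4)$ bound on the residuals, and \eqref{eqn:maxrecurbd} finishes the proof.

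\textbf{Main obstacle.} The delicate point is the cancellation bookkeeping in the middle step: one must expand the leapfrog one-step map at $(z_n,w_n)$ — including the nested, implicit force evaluation $f\bigl(z_n+hM^{-1}w_n+\tfrac{h^2}{2}M^{-1}f(z_n)\bigr)$, whose argument itself must be expanded to $\mathcal{O}(h^3)$ accuracy about $q(t_{n+1})$ — carefully enough that everything through order $h^3$ provably vanishes and a clean $\mathcal{O}(h^4)$ remainder survives. This is the classical construction of an asymptotic expansion for a one-step integrator (cf. \cite{Hair:etal:87}, Section II.8); the only real novelty, and the source of the extra care required, is the insistence on keeping rather than absorbing the norms $\|p(\cdot)\|_\infty,\|f(q(\cdot))\|_\infty$ (and those of $u,v$) throughout, since the constant $K$ is allowed to depend only on $B,T,\|M\|,\|M^{-1}\|$.
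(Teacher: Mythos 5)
Your proposal follows essentially the same route as the paper's proof: reduce everything to bounding the residuals $\tau^{(q)*}_n,\tau^{(p)*}_n$ by $Kh^4(\|p(\cdot)\|^4_\infty+\|f(q(\cdot))\|^2_\infty+1)$ via Taylor expansion of the leapfrog map at $(z_n,w_n)$ (including the nested force evaluation), with the order-$h^3$ terms cancelling because $(u,v)$ solves the variational system \eqref{eqn:uvode}, then feed the a priori bounds \eqref{eqn:pqd1}, \eqref{eqn:uvmaxbd}, \eqref{eqn:nv1}, \eqref{eqn:nv2} into the remainders and finish with the stability estimate \eqref{eqn:maxrecurbd}. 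The paper merely organizes the same expansion into five explicitly grouped terms ($I_1$ through $I_5$) estimated one by one, so your plan is correct and matches its structure.
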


\begin{proof}
Our task is reduced to estimating $\tau^{(q)*}_n,\tau^{(p)*}_n$.
We only present the estimation for $\tau^{(p)*}_n$, since the
computations for $\tau^{(q)*}_n$ are similar but simpler.

Indeed, after regrouping the terms,
\begin{align*}
-\tau^{(p)*}_n 
&= \underbrace{p(t_{n+1}) - p(t_n) - \frac{h}{2} f(q(\tn)) - \frac{h}{2} f(q(t_{n+1}))+ \frac{h^3}{12} \ddddot{p}(t)}_{I_1} \\
& + \underbrace{h^2\Big(u(t_{n+1}) - u(t_n) - h f'(q(\tn)) v(t_n)- \frac{h}{12}\dddot{p}(t)\Big)}_{I_2} \\
&+  \underbrace{\frac{h}{2}\Big(f(q(t_n)) - f(z_n) + h^2 f'(q(t_n)) v(t_n)\Big)}_{I_3} \\
&- \underbrace{ \frac{h}{2}\Big(f\big(z_n + h M^{-1}w_n + \frac{h^2}{2} M^{-1} f(q(t_n))\big) - f(q(t_{n+1}) -h^2 f'(q(\tn)) v(t_n)\Big)}_{I_4}\\
&+ \underbrace{\frac{h}{2}\Big(f\big(z_n + h M^{-1}w_n +
\frac{h^2}{2} M^{-1} f(q(t_n)) - f\big(z_n + h M^{-1}w_n +
\frac{h^2}{2} M^{-1} f(z_n)\big)\Big)}_{I_5}
\end{align*}
Now we estimate the above five terms separately.\\
$I_1$: We note that
\begin{align*}
p(t_{n+1}) - p(t_n) - \frac{h}{2} f(q(\tn))& - \frac{h}{2}
f(q(t_{n+1})) = p(t_{n+1}) - p(t_n) - \frac{h}{2}\dot{p}(t_{n+1})
- \frac{h}{2} \dot{p}(t_n)\ .
\end{align*}
and by using the estimates in \eqref{eqn:pqd1} it follows that
\begin{align*}
|I_1| &\leq Kh^4(\|p(\cdot)\|_{\infty} +
\|p(\cdot)\|_{\infty}\|f(q(\cdot))\|_{\infty} +
\|f(q(\cdot))\|_\infty)\ .
\end{align*}
$I_2$: Here we write $I_2  = h^2(u(t_{n+1}) - u(t_n) -
h\,\dot{u}(t_n))$ so that by \eqref{eqn:nv1}
\begin{align*}
|I_2| \leq Kh^4(\|p(\cdot)\|^3_{\infty} +
\|p(\cdot)\|_{\infty}\|f(q(\cdot))\|_{\infty}+\|f(q(\cdot))\|_{\infty}).
\end{align*}
$I_3:$  This term is estimated, after Taylor expanding $f(z_n)$
near $f(q(t_n))$, by
\begin{align*}
|I_3| \leq K h^5(\|p(\cdot)\|_{\infty} +
\|f(q(\cdot))\|_{\infty})^2.
\end{align*}
$I_4:$ We rewrite this as
\begin{align*}
&\frac{h}{2} \Big(f\big(q(t_{n+1}) + \tau^{(q)}_n + h^2 v(t_n) +
h^3 M^{-1} v(t_n)\big) - f(q(t_{n+1})) - h^2 f'(q(t_n))v(t_n)
\Big)
\end{align*}
and Taylor expand around $f(q(t_n))$ to derive the bound:
\begin{align*}
|I_4| \leq Kh^4(\|p(\cdot)\|^4_{\infty}  +
\|f(q(\cdot))\|^2_{\infty}).
\end{align*}
$I_5:$ This term is easily estimated as:
\begin{align*}
|I_5| &\leq Kh^5\|v(\cdot)\|_{\infty} \leq
Kh^5(\|p(\cdot)\|^2_{\infty} + \|f(q(\cdot))\|_{\infty})\ .
\end{align*}

Combining all the above estimates, we have the bound
\begin{align*}
|\tau^{(p)*}_n| \leq Kh^4(\|p(\cdot)\|^4_{\infty}  +
\|f(q(\cdot))\|^2_{\infty})\ .
\end{align*}
A similar analysis for $\tau^{(q)*}_n$   yields the bound
\begin{align*}
|\tau^{(q)*}_n| \leq Kh^4(\|p(\cdot)\|^4_{\infty} + \|f(q(\cdot))\|^2_{\infty})\ .
\end{align*}
The proof  is completed by substituting the above estimates in
\eqref{eqn:maxrecurbd}.
\end{proof}

We now use the
estimates in Proposition \ref{thm:errglobfin}
to derive the asymptotic expansion
for the energy increment for the leapfrog scheme (cf. Condition
1).

\begin{prop} \label{thm:alphabeta} Let  potential $V$ satisfy  Assumptions \ref{assum:1}.
Then, for the leapfrog scheme, we get
\begin{equation*}
\Delta(x,h) =  h^2 \alpha(x)  + h^2\rho(x,h)\ ,
\end{equation*}
with
\begin{align*}
\alpha(x) &= \langle M^{-1}p(T), u(T)\rangle - \langle f(q(T)), v(T) \rangle\ , \\
|\alpha(x)| &\leq K (\|p(\cdot)\|^3_{\infty} + \|f(q(\cdot))\|^2_{\infty} + 1)\ , \\
|\rho(x,h)| &\leq Kh (\|p(\cdot)\|^8_{\infty} +
\|f(q(\cdot))\|^2_{\infty} + 1), \quad 0 < h \leq 1\ ,
\end{align*}
where $(q(\cdot), p(\cdot))$ denotes the solution of \eqref{eqn:ham2} with initial data $x \equiv (q(0),p(0))$
and $u(\cdot), v(\cdot)$ are the solutions of the corresponding variational system given in \eqref{eqn:uvode}
with $u(0) = v(0) =0$.
\end{prop}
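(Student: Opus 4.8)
The plan is to combine energy conservation of the exact flow with the sharp global-error expansion of Proposition~\ref{thm:errglobfin}. Write $N=\lfloor T/h\rfloor$, $\tn=Nh$, $(q_N,p_N)=\psith(x)$, and let $(q(\cdot),p(\cdot))$ be the exact trajectory through $x$. Since $H\circ\phi_t=H$, one has $H(q(\tn),p(\tn))=H(x)=H(\psit(x))$, hence
\begin{equation*}
\Delta(x,h)=H(q_N,p_N)-H\bigl(q(\tn),p(\tn)\bigr).
\end{equation*}
Writing $\delta_p:=p_N-p(\tn)$ and $\delta_q:=q_N-q(\tn)$, symmetry of $M^{-1}$ gives $\tfrac12\langle p_N,M^{-1}p_N\rangle-\tfrac12\langle p(\tn),M^{-1}p(\tn)\rangle=\langle M^{-1}p(\tn),\delta_p\rangle+\tfrac12\langle M^{-1}\delta_p,\delta_p\rangle$, while a second-order Taylor expansion of $V$ about $q(\tn)$ (using $\nabla V=-f$ and the bound $|f'|\le B$ from Assumptions~\ref{assum:1}) gives $V(q_N)-V(q(\tn))=-\langle f(q(\tn)),\delta_q\rangle+O\!\left(|\delta_q|^{2}\right)$.

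Next I would insert $\delta_p=h^2u(\tn)+e_p$, $\delta_q=h^2v(\tn)+e_q$ from Proposition~\ref{thm:errglobfin}, where $|e_p|,|e_q|\le Kh^3(\|p(\cdot)\|_\infty^4+\|f(q(\cdot))\|_\infty^2+1)$. Collecting powers of $h$, the \emph{only} $O(h^2)$ contribution is
\begin{equation*}
h^2\bigl[\langle M^{-1}p(\tn),u(\tn)\rangle-\langle f(q(\tn)),v(\tn)\rangle\bigr];
\end{equation*}
replacing $\tn$ by $T$ costs an extra error of size $\le Kh\,(\cdots)$, since $|T-\tn|<h$ and $\tfrac{d}{dt}\langle M^{-1}p,u\rangle$, $\tfrac{d}{dt}\langle f(q),v\rangle$ are controlled through \eqref{eqn:pqd1}, \eqref{eqn:uvode} and \eqref{eqn:uvmaxbd}. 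This identifies $\alpha(x)=\langle M^{-1}p(T),u(T)\rangle-\langle f(q(T)),v(T)\rangle$, while everything else — the $O(h^3)$ terms $\langle M^{-1}p(\tn),e_p\rangle$ and $\langle f(q(\tn)),e_q\rangle$, the $O(h^4)$ terms $\tfrac12\langle M^{-1}\delta_p,\delta_p\rangle$ and the quadratic $V$-remainder, and the $O(h^3)$ shift from $\tn$ to $T$ — is divided by $h^2$ and collected into $\rho(x,h)$, which is therefore $O(h)$ and in particular tends to $0$ as $h\to0$.

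It then remains to upgrade these $O(\cdot)$ statements to the stated polynomial envelopes. For $\alpha$, bound $|\langle M^{-1}p(T),u(T)\rangle|\le\|M^{-1}\|\,\|p(\cdot)\|_\infty\|u(\cdot)\|_\infty$ and $|\langle f(q(T)),v(T)\rangle|\le\|f(q(\cdot))\|_\infty\|v(\cdot)\|_\infty$, apply the a priori estimate \eqref{eqn:uvmaxbd}, $\|u(\cdot)\|_\infty+\|v(\cdot)\|_\infty\le K(\|p(\cdot)\|_\infty^2+\|f(q(\cdot))\|_\infty)$, and consolidate mixed terms by Young's inequality (for instance $t^2\le t^3+1$ and $ab\le\tfrac12a^2+\tfrac12b^2$) to reach $K(\|p(\cdot)\|_\infty^3+\|f(q(\cdot))\|_\infty^2+1)$. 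For $\rho$, combine the same ingredients with $0<h\le1$: the dominant piece, $h^{-2}|e_p|^2+h^{-2}|e_q|^2\le Kh^4(\|p(\cdot)\|_\infty^4+\|f(q(\cdot))\|_\infty^2+1)^2\le Kh\,(\cdots)$, is what forces the eighth power of $\|p(\cdot)\|_\infty$, and gathering all terms gives $|\rho(x,h)|\le Kh(\|p(\cdot)\|_\infty^8+\|f(q(\cdot))\|_\infty^2+1)$ for $0<h\le1$.

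The genuinely delicate part is not any single estimate but the bookkeeping that keeps every implied constant dependent only on $B,T,\|M\|,\|M^{-1}\|$ while correctly propagating the polynomial growth in the initial datum $x$ through $u,v$ and through the leapfrog error terms; this is precisely why one needs the initial-condition-explicit form of the global-error estimate in Proposition~\ref{thm:errglobfin} together with the a priori bounds \eqref{eqn:pqd1} and \eqref{eqn:uvmaxbd}, rather than off-the-shelf convergence results that bury this dependence. The mismatch $\tn\neq T$ when $T/h\notin\mathbb{Z}$ is a purely cosmetic nuisance, absorbed by the single extra Taylor step indicated above.
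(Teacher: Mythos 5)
Your argument is correct and follows essentially the same route as the paper's own proof: both exploit energy conservation of the exact flow to write $\Delta(x,h)=H(\psith(x))-H(\psit(x))$, insert the initial-condition-explicit global-error expansion of Proposition \ref{thm:errglobfin}, expand the kinetic energy exactly and Taylor-expand $V$ about the exact endpoint, read off $\alpha$ from the $h^2$ terms, and bound $\alpha$ and the remainder through \eqref{eqn:uvmaxbd} and elementary Young-type consolidations. The only difference is cosmetic: the paper restricts attention to $T/h\in\mathbb{Z}$ and notes the general case requires minor adjustments, whereas you carry out the shift from $\tn=\lfloor T/h\rfloor h$ to $T$ explicitly, which is a harmless (indeed slightly more complete) variant of the same argument.
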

\begin{proof}
We only consider
the case when ${T}/{h}$ is an integer. The general case follows with minor
adjustments. By Proposition \ref{thm:errglobfin},
\begin{multline*}
\Delta(x,h) =  H(\psith(x)) - H(x) =H(\psith(x)) - H(\psit(x)) = \\
= \langle M^{-1} p(T), h^2 u(T) + h^3 R_1 \rangle +
\frac{1}{2}\left \langle M^{-1}(h^2 u(T) + h^3 R_1, (h^2 u(T) + h^3 R_1)\right \rangle \\
\qquad + V\Big(q(T) + h^2 v(T) + h^3 R_2\Big) - V(q(T))\ ,
\end{multline*}
where $R_1, R_2$ are remainders with
$$|R_1| + |R_2| \leq
K(\|p(\cdot)\|^4_{\infty} + \|f(q(\cdot))\|^2_{\infty} + 1)\ .$$

By
Taylor expanding $V(\cdot)$ around $q(T)$ we obtain,
\begin{align*}
\Delta(x,h) =  h^2 \big(\langle M^{-1}p(T), u(T)\rangle - \langle
f(q(T)), v(T) \rangle\big) + \rho(x,h)\ ,
\end{align*}
with $$ |\rho(x,h)| \leq Kh^3 (\|p(\cdot)\|^8_{\infty} +
\|f(q(\cdot))\|^2_{\infty} + 1)$$ for $0\leq h \leq 1$. From the
bound \eqref{eqn:uvmaxbd} it follows that
\begin{align*}
|\alpha(x)| &\leq  K(\|p(\cdot)\|_{\infty} \|u(\cdot)\|_{\infty} + \|f(q(\cdot))\|_{\infty} \|v(\cdot)\|_{\infty}) \\
& \leq K(\|p(\cdot)\|^3_{\infty} + \|f(\cdot)\|^2_{\infty} + 1)
\end{align*}
and the theorem is proved.
\end{proof}

Our analysis is completed by estimating the quantities
$\|p(\cdot)\|_{\infty}$ and $\|q(\cdot)\|_{\infty}$, that feature
in the preceding theorems, in terms of the initial data
$(q(0),p(0))$. We obtain these estimates for two families of
potentials which include most of the interesting/useful target
distributions. The corresponding estimates for other potentials
may be obtained using similar methods.
\begin{prop}\label{thm:condvassum}
Let potential $V$ satisfy 
Assumptions \ref{assum:1}. If
$V$ satisfies, in addition, either of the following conditions:
\begin{enumerate}
\item[(i)]  $f$ is bounded and
\begin{align}
\label{eqn:simplevhyp}
\int_{\bbR^{m}} |V(q)|^8 e^{-V(q)} dq < \infty\ ;
\end{align}
\item[(ii)] 
there exist constants $C_1, C_2 >0$ and $ 0< \gamma \leq 1$ such that
for all  $|q| \geq C_2$, we have
$ V(q) \geq C_1|q|^{\gamma}$\,; 
\end{enumerate}
then Conditions \ref{cond:delta}, \ref{cond:beta}
and \ref{cond:E} all  hold.
\end{prop}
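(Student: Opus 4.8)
The plan is to reduce the three Conditions to a single moment estimate for the Hamiltonian flow and then to establish that estimate via conservation of energy. First I would note that Condition~\ref{cond:delta} is, for each fixed $x$, already contained in Proposition~\ref{thm:alphabeta}: the bound on $f'$ in Assumptions~\ref{assum:1} makes $f$ grow at most linearly, so the flow $(q(\cdot),p(\cdot))$ of \eqref{eqn:ham2} exists on $[0,T]$ and $\|p(\cdot)\|_{\infty}$, $\|f(q(\cdot))\|_{\infty}$ are finite, whence $|\rho(x,h)|\le Kh(\|p(\cdot)\|_{\infty}^{8}+\|f(q(\cdot))\|_{\infty}^{2}+1)\to0$ as $h\to0$. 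For Conditions~\ref{cond:beta} and~\ref{cond:E} I would combine Propositions~\ref{thm:errglobfin} and~\ref{thm:alphabeta} with the bound \eqref{eqn:uvmaxbd} on $\|u(\cdot)\|_{\infty}+\|v(\cdot)\|_{\infty}$ to obtain, for $0\le h\le1$,
\[
\frac{|\Delta(x,h)|^{2}}{h^{4}}\le K\bigl(\|p(\cdot)\|_{\infty}^{16}+\|f(q(\cdot))\|_{\infty}^{4}+1\bigr),\qquad
\frac{|\psith(x)-\phi_{T}(x)|}{h^{2}}\le K\bigl(\|p(\cdot)\|_{\infty}^{4}+\|f(q(\cdot))\|_{\infty}^{2}+1\bigr).
\]
Taking $D(x)$ and $E(x)$ to be the respective right-hand sides, the whole matter comes down to proving
\[
\int_{\bbR^{2m}}\bigl(\|p(\cdot)\|_{\infty}^{k}+\|f(q(\cdot))\|_{\infty}^{k}\bigr)\,e^{-H(x)}\,dx<\infty
\]
for $k=16$, together with the standing requirement $\int_{\bbR^{m}}e^{-V}\,dq<\infty$ implicit in \eqref{eq:iid}.

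The engine of this last estimate is conservation of energy along the true flow: $H(q(t),p(t))\equiv H(q(0),p(0))=:H_{0}=H(x)$ on $[0,T]$. Since $V\ge0$ and $\tfrac12\langle p,M^{-1}p\rangle\ge c\,|p|^{2}$ with $c=\tfrac12\|M\|^{-1}>0$, this yields the pointwise-in-$x$ bounds $\|p(\cdot)\|_{\infty}^{2}\le H_{0}/c$ and $\|V(q(\cdot))\|_{\infty}\le H_{0}$. Under hypothesis (i), $f$ bounded makes $\|f(q(\cdot))\|_{\infty}$ a constant, so its powers integrate once $\int e^{-H}<\infty$; for the $\|p(\cdot)\|_{\infty}^{k}$ term I would use $\|p(\cdot)\|_{\infty}^{k}\le CH_{0}^{k/2}\le C\bigl((\tfrac12\langle p(0),M^{-1}p(0)\rangle)^{k/2}+V(q(0))^{k/2}\bigr)$ and integrate against $e^{-H(x)}=e^{-\tfrac12\langle p(0),M^{-1}p(0)\rangle}e^{-V(q(0))}$: the kinetic piece factors as a Gaussian moment times $\int e^{-V}$, and the potential piece is $\int V(q)^{k/2}e^{-V(q)}\,dq$, which for $k=16$ is exactly \eqref{eqn:simplevhyp}. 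Under hypothesis (ii), $\|V(q(\cdot))\|_{\infty}\le H_{0}$ together with $V(q)\ge C_{1}|q|^{\gamma}$ for $|q|\ge C_{2}$ forces $\|q(\cdot)\|_{\infty}\le C(1+H_{0}^{1/\gamma})$, and since $f'$ is bounded, $|f(q)|\le|f(0)|+B|q|$ yields $\|f(q(\cdot))\|_{\infty}\le C(1+H_{0}^{1/\gamma})$; hence both $\|p(\cdot)\|_{\infty}^{k}$ and $\|f(q(\cdot))\|_{\infty}^{k}$ are dominated by $C(1+H_{0})^{N}$ for some $N=N(k,\gamma)$, and I would conclude by checking $\int(1+H(x))^{N}e^{-H(x)}\,dx<\infty$. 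Splitting $(1+\tfrac12\langle p,M^{-1}p\rangle+V(q))^{N}$ reduces this to $\int(1+V(q))^{N}e^{-V(q)}\,dq<\infty$, which follows from $(1+V)^{N}e^{-V}\le C_{N}e^{-V/2}$ and $\int e^{-V(q)/2}\,dq\le\mathrm{Leb}(\{|q|<C_{2}\})+\int e^{-C_{1}|q|^{\gamma}/2}\,dq<\infty$, the last integral being finite because $\gamma>0$.

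The argument is elementary once energy conservation has been invoked, so I do not anticipate a genuine obstacle; the only thing demanding attention is the bookkeeping of exponents. One must verify that the fourth power of the error in Condition~\ref{cond:E} and the square of $|\Delta|$ in Condition~\ref{cond:beta}, after being passed through $\|p(\cdot)\|_{\infty}^{2}\le H_{0}/c$, $\|V(q(\cdot))\|_{\infty}\le H_{0}$ and the $\|p(\cdot)\|_{\infty}^{8}$ appearing in the remainder bound of Proposition~\ref{thm:alphabeta}, produce precisely the eighth moment of $V$ postulated in \eqref{eqn:simplevhyp}. A secondary point, harmless but worth recording, is that in case (i) one needs $\int e^{-V}<\infty$ as a separate input since \eqref{eqn:simplevhyp} controls only the tails of $V$ and says nothing on $\{V<1\}$; this subtlety disappears if \eqref{eqn:simplevhyp} is read as $\int(1+|V|)^{8}e^{-V}\,dq<\infty$.
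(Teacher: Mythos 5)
Your argument is correct and follows essentially the same route as the paper's proof: take $D(x)$ and $E(x)$ from Propositions \ref{thm:errglobfin} and \ref{thm:alphabeta}, then use conservation of energy to bound $\|p(\cdot)\|_{\infty}$ (and, under (ii), $\|q(\cdot)\|_{\infty}$ via the lower bound $V(q)\ge C_1|q|^{\gamma}$ and the linear growth of $f$), reducing everything to moments of $V$ against $e^{-V}$, which is exactly the paper's reduction to \eqref{eqn:simplevhyp} in case (i) and to $\int|V|^{\delta}e^{-V}<\infty$ in case (ii). Your bookkeeping in case (ii) via $(1+H_0)^{N}$ and the bound $(1+V)^{N}e^{-V}\le C_N e^{-V/2}$ is only a cosmetic variant of the paper's computation, and your side remark that $\int e^{-V}<\infty$ is a standing assumption (the target is a probability density) is accurate and harmless.
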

\begin{proof}
We only present the treatment of Conditions \ref{cond:delta}
and \ref{cond:beta}. The derivation
of Condition \ref{cond:E} is similar and simpler.

From Proposition \ref{thm:alphabeta} we observe that function
$D(x)$ in Condition \ref{cond:beta} may be taken to be 
\begin{equation*}
D(x) =
K(\|p(\cdot)\|^{16}_{\infty} + \|f(q(\cdot))\|^4_{\infty}+1)\ .
\end{equation*}
Thus,
to prove  integrability of $D(\cdot)$ we need to estimate
$\|p(\cdot)\|_{\infty}$ and $\|f(q(\cdot))\|_{\infty}$.
Estimating $\|p(\cdot)\|_\infty$ is easier. Indeed, by
conservation of energy,
\begin{equation*} 
\frac{1}{2} \langle p(t), M^{-1} p(t)\rangle
\leq  \frac{1}{2} \langle p(0), M^{-1} p(0) \rangle + V(q(0))\ ,
\end{equation*}
which implies 
\begin{equation}
\label{eqn:pinf}
|p(t)|^{16} \leq K(|p(0)|^{16}+ |V(q(0))|^8)\ .
\end{equation}
Now, we prove integrability of $D(\cdot)$ under each of the two stated hypothesis.

Under hypothesis (i): Suppose $f$ is bounded.  In this case we obtain that  $|D(x)| \leq K
(\|p(\cdot)\|^{16}_{\infty} +1 )$, therefore it is enough to
estimate $\|p(\cdot)\|_{\infty}$. Since the Gaussian distribution has
all moments, integrability of $D$ follows from 
\eqref{eqn:simplevhyp}
and \eqref{eqn:pinf}.

Under hypothesis (ii):  Using the stated hypothesis on $V(q)$ we obtain
\begin{equation*}
C_1|q(t)|^{\gamma}  
\leq V(q(t)) \leq \frac{1}{2} \langle p(0), M^{-1} p(0) \rangle + V(q(0))\ ,
\end{equation*}
which implies that:
\begin{equation*}
 |q(t)| \leq K\Big(|p(0)|^{\frac{2}{\gamma}} + |V(q(0))|^{\frac{1}{\gamma}}\Big)\ .
\end{equation*}
By Assumptions \ref{assum:1}(ii), $|f(q(t))| \leq K(1+ |q(t)|)$ and arguing as
above and using the bound \eqref{eqn:pinf}, integrability of $D$
follows if we show that
\begin{align*}
\int_{\bbR^m} |V(q)|^{\delta}\, e^{-V(q)} dq < \infty\ , \quad \delta = \max(8, \frac{4}{\gamma})\ .
\end{align*}
Since $|V(q)| \leq K(1 + |q|^2)$,
\begin{align*}
\int_{\bbR^m} |V(q)|^{\delta}\, e^{-V(q)} dq \leq K\int_{\bbR^m} (1 + |q|^{2\delta})\, e^{-B|q|^\gamma} dq < \infty
\end{align*}
and we are done.
\end{proof}
\section{Proofs of Probabilistic Results}
\label{sec:proofs}

\begin{proof}[Proof of Lemma \ref{lem:main}]
The volume preservation property of $\psith(\cdot)$ implies that
the associated Jacobian is unit. Thus, setting $x =
{(\psith)}^{-1}(y)$ we get:
\begin{align*}
\int_{\mathbb{R}^{2m}}  \phi(\Delta(x,h))\, e^{-H(x)} \,dx &=
\int_{\mathbb{R}^{2m}} \phi\big(H(\psith (x)) - H(x) \big)\, e^{-H(x)} dx  \\
&=\int_{\mathbb{R}^{2m}}  \phi\big[H (y) -
H((\psith)^{-1}(y))\big]\, e^{-H((\psith)^{-1}(y))} dy\ .
\end{align*}
Following the definition of time reversibility in (\ref{eqn:sym}),
we have:
\begin{equation*}
S\circ \psith = (\psith)^{-1}\circ S
\end{equation*}
for the symmetry operator $S$ such that $S(q,p) = (q,-p)$. Using
now the volume presenving transformation $y=S z$  and continuing
from above, we get:
\begin{align*}
&\int_{\mathbb{R}^{2m}}  \phi(\Delta(x,h))\, e^{-H(x)} \,dx\\
&\qquad\qquad=\int_{\mathbb{R}^{2m}}  \phi\big(H (Sz) - H((\psith)^{-1}(Sz) ) \big)\, e^{-H((\psith)^{-1}(Sz))} dz\\
&\qquad\qquad=\int_{\mathbb{R}^{2m}}  \phi\big(H (Sz) - H(S\psith (z) ) \big)\, e^{-H(S (\psith (z)))} dz \\
&\qquad\qquad=\int_{\mathbb{R}^{2m}}  \phi\big(H (z) - H(\psith
(z) ) \big)\, e^{-H( \psith (z))} dz,
\end{align*}
where in the last equation we have used the identity $H(Sz) =
H(z)$.
\end{proof}

\begin{proof}[Proof of Proposition \ref{thm:2musig}]
We will first find the limit of $\sigma^{2}(h)/h^{4}$. Conditions
\ref{cond:delta} and \ref{cond:beta} imply that:
\begin{equation*}
\frac{\Delta^2(x,h)}{h^4} = \alpha^{2}(x) + \rho^{2}(x,h) + 2
\rho(x,h)\alpha(x) \le D(x)
\end{equation*}
and since, for fixed $x$, $\Delta^2(x,h)/h^4\rightarrow
\alpha^{2}(x)$, the dominated convergence theorem shows:
\begin{equation*}
\lim_{h\rightarrow 0}\frac{s^2(h)}{h^4} =
\int_{\R^{2m}}\alpha^2(x)\,e^{-H(x)}dx = \Sigma \ .
\end{equation*}
Now, (\ref{eqn:mubd}) implies that:
\begin{equation}
\label{eq:a1}
\lim_{h\rightarrow 0}\frac{\mu^2(h)}{h^4} = 0 \ ,
\end{equation}
and the required limit for $\sigma^{2}(h)/h^{4}$ follows directly.
Then, from  (\ref{eqn:muhfin}) we obtain
\begin{align*}
\qquad \frac{ 2\mu(h) - \sigma^2(h)}{h^4} & = \nonumber\\ &
\hspace{-1.2cm}-\int_{\mathbb{R}^{2m}}  \frac{\Delta(x,h)}{h^2}
\frac{ \big[\exp(-\Delta(x,h))  - 1 +\Delta(x,h)\big]}{h^2}\,
e^{-H(x)} \,dx + \frac{\mu^2(h)}{h^4} \ .
\end{align*}
Since for any fixed $x$, Conditions \ref{cond:delta} and
\ref{cond:beta} imply that $\Delta(x,h) \rightarrow 0$ as $h
\rightarrow 0$ and $\Delta^2(x,h) = \mathcal{O}(h^4)$, we have the pointwise
limit
\begin{equation*}
\lim_{h \rightarrow 0}\frac{\exp(-\Delta(x,h))  - 1
+\Delta(x,h)}{h^2} = 0\ .
\end{equation*}
Using the inequality $|u||e^u -1- u| \leq |u|^2(e^u + 2)$,
we deduce that for all
sufficiently small $h$,  
\begin{align*}
&\int_{\mathbb{R}^{2m}}  \frac{|\Delta(x,h)|}{h^2}
\frac{ \big|\exp(-\Delta(x,h))\big|}{h^2}\,
e^{-H(x)} \,dx    \\
&\leq \int_{\mathbb{R}^{2m}}  \frac{|\Delta^2(x,h)|}{h^4} \,
{ \exp(-\Delta(x,h)) }\,
e^{-H(x)} \,dx + 2\,\int_{\mathbb{R}^{2m}}  \frac{|\Delta^2(x,h)|}{h^4}\,e^{-H(x)} \,dx\\
&   \leq  3\int_{\bbR^{2m}} D(x)\, e^{-H(x)} dx < \infty,
\end{align*}
%
where the last line follows from applying Lemma \ref{lem:main}
with $\phi(x) = x^2$ and Condition \ref{cond:beta}.
So, the dominated convergence theorem  yields
%
\begin{equation*}
\lim_{h\rightarrow 0}\frac{ 2\mu(h) - \sigma^2(h)}{h^4} = 0 \ .
\end{equation*}
This completes the proof of the proposition.
\end{proof}

\begin{proof}[Proof of Theorem  \ref{thm:lim}]
We continue from (\ref{eq:R}). In view of the scaling  $h=l\cdot
d^{-1/4}$ we obtain, after using  Proposition \ref{thm:2musig}:
\begin{equation*}
\E\,[\,R_d\,] = - d\cdot \mu(h) \rightarrow - \frac{l^4\,\sigma}{2}
\end{equation*}
and
\begin{equation*}
\mathrm{Var}\,[\,R_d\,] = d\cdot \sigma^2(h) \rightarrow
l^4\,\Sigma \ .
\end{equation*}
The Lindeberg condition is easily seen to hold and therefore:
\begin{equation*}
R_d \stackrel{\mathcal{L}}{\longrightarrow} R_{\infty}:= N(-
\tfrac{l^4\,\Sigma}{2},l^4\,\Sigma)  \ .
\end{equation*}
From the boundedness of $u\mapsto 1\wedge e^{u}$ we may write:
\begin{equation*}
\E\,[\,a(X,Y)\,] \rightarrow \E\,[\,1\wedge e^{R_{\infty}}\,]\ ,
\end{equation*}
where the last expectation can be  found analytically (see e.g.\@
\cite{robe:97}) to be:
\begin{equation*}
\E\,[\,1\wedge e^{R_{\infty}}\,]  = 2 \Phi(- l^{2}
\sqrt{\Sigma}/{2}) \ .
\end{equation*}
This  completes the proof.
\end{proof}

\begin{proof}[Proof of Proposition \ref{th:desplaz}]
For simplicity, we will write just $q^{n}$, $q^{n+1}$ and $p^{n}$
instead of $q_{1}^n$, $q_{1}^{n+1}$, $p_{1}^n$ respectively. Using
(\ref{eq:q}), we get:
\begin{equation*}
(q^{n+1}-q^{n})^2 = I^{n}\,( \mathcal{P}_q \psith(q^n,p^n) - q^{n}
)^2 \ .
\end{equation*}
We define:
\begin{equation}
\label{eq:aaa} a^{-}(X^n,Y^n) := 1 \wedge \exp\bigl\{
-\sum_{i=2}^{d} \Delta(x_i^n,h) \bigr\}\,;\quad I^{n-} :=
\mathbb{I}_{\,U^{n}<a^{-}(X^n,Y^n)\,}\,,
\end{equation}
and set
\begin{equation*}
\xi^{n} = I^{n-}(\,\mathcal{P}_q\,\psith(q^n,p^n) - q^{n}\, )^2 \
.
\end{equation*}
Using the Lipschitz continuity of $u\mapsto \mathbb{I}_{\,U\,\le
\,1\wedge\, e^{u}\,}$ and the Cauchy-Schwartz inequality we get:
\begin{equation*}
\E\,| (q^{n+1}-q^{n})^2 -\xi^{n}| \le |\Delta(x_{1},h)|_{L_2}\,|(
\mathcal{P}_q \psith(q^n,p^n) - q^{n} )^2 |_{L_2}
\end{equation*}
Now, Conditions \ref{cond:delta} and \ref{cond:beta} imply that
\begin{equation*}
|\Delta(x_{1},h)|_{L_2} = \mathcal{O}(h^2)\ .
\end{equation*}
Also, from Condition \ref{cond:E} and the stated hypothesis on the density
$\exp(-V)$, $q^{n}$ and $\mathcal{P}_q \psith(q^n,p^n)$ have bounded
fourth moments uniformly in $h$, so:
\begin{equation*}
|( \mathcal{P}_q \psith(q^n,p^n) - q^{n} )^2 |_{L_2} \le C \ ,
\end{equation*}
for some constant $C>0$. The last two statements imply that:
\begin{equation}
\label{eq:eq} \E\,| (q^{n+1}-q^{n})^2 -\xi^{n}| = \mathcal{O}(h^2)
\ .
\end{equation}
Exploiting the independence between $I^{n-}$ and the first
particle:
\begin{multline*}
\E\,[\,\xi_n\,] = \E\,[\,a^{-}(X,Y)\,]\times \E\,[\,( \mathcal{P}_q \psith(q^n,p^n) - q^{n} )^2\,] \longrightarrow \\
a(l)\cdot \E\,[\,(\mathcal{P}_q \psit(q^n,p^n)-q^{n})^2\,] \ ,
\end{multline*}
where, for the first factor we used its limit from Theorem
\ref{thm:lim}; for the second factor the limit is a consequence by
Condition 3 and the dominated convergence theorem. Equation
(\ref{eq:eq}) completes the proof.
\end{proof}

\begin{proof}[Proof of Proposition \ref{thm:lim1}]
Fix some $q_1^{n}\in\R^m$. We define $a^{-}(X^n,Y^n)$ and $I^{n-}$
as in (\ref{eq:aaa}). For simplicity, we will write just $q^{n}$,
$q^{n+1}$, $\mathsf{q}^{n+1}$ and $p^{n}$ instead of $q_{1}^n$,
$q_{1}^{n+1}$, $\mathsf{q}_1^{n+1}$ and $p_{1}^n$ respectively.

We set
\begin{equation*}
g^{n+1}= I^{n-}\cdot \mathcal{P}_q
\psit(q^{n},p^{n})+\bigl(1-I^{n-})\,q^{n} \ .
\end{equation*}
Adding and subtracting
$I^{n}\cdot\mathcal{P}_{q}(\psit(q^{n},p^{n}))$  yields:
\begin{align}
|q^{n+1}-g^{n+1}| \le |\mathcal{P}_{q}(\psith(q^{n},p^{n}))&-\mathcal{P}_{q}(\psit(q^{n},p^{n}))| \nonumber \\
&+ |I^{n-}-I^{n}|\,\bigl(|\mathcal{P}_{q}(\psit(q^{n},p^{n}))| +
|q^{n}|\bigr) \ . \label{eq:diff}
\end{align}
Using the Lipschitz continuity (with constant 1) of $u\mapsto
\mathbb{I}_{\,U\,\le \,1\wedge\, \exp(u)\,}$:
\begin{equation}
\label{eq:diff1} |I^{n-}-I^{n}| \le |\Delta(x_{1},h)|\ .
\end{equation}
Now, Condition \ref{cond:E} implies that the first term on the right-hand
side of (\ref{eq:diff}) vanishes w.p.\@1 and Condition
\ref{cond:delta} implies (via (\ref{eq:diff1})) that also the
second term  vanishes w.p.\@1. Therefore, as $d\rightarrow
\infty$:
\begin{equation*}
q^{n+1}-g^{n+1} \rightarrow 0,\,\,\,\textrm{a.s.}\ .
\end{equation*}
Theorem \ref{thm:lim} immediately implies that $I^{n-}
\stackrel{\mathcal{L}}{\longrightarrow} \mathsf{I}^{n}$, thus:
\begin{equation*}
g^{n+1} \stackrel{\mathcal{L}}{\longrightarrow} \mathsf{q}^{n+1} \
.
\end{equation*}
From these two limits,  we have
$q^{n+1}\stackrel{\mathcal{L}}{\longrightarrow} \mathsf{q}^{n+1}$,
and this completes the proof.
\end{proof}

\begin{proof}[Proof of Theorem \ref{thm:sjd}]
To simplify the notation we again drop the subscript 1.
Conditionally on the trajectory $q^0, q^{1},\ldots$ we get:
\begin{equation*}
(q(t+\delta)-q(t))^2 = \left\{
\begin{array}{ll}
0\, , & \textrm{w.p.}\,\,\,1-\lambda_{d}\delta + \mathcal{O}((\lambda_d\delta)^2)\ ,\\
(q^{N(t)+1}-q^{N(t)})^2\, ,& \textrm{w.p.}\,\,\,\lambda_{d}\delta + \mathcal{O}((\lambda_d\delta)^2)\ ,\\
(q^{N(t)+1+j}-q^{N(t)})^2\,,\,\,j\ge 1\, , &
\textrm{w.p.}\,\,\,\mathcal{O}((\lambda_d\delta)^{j+1})\ .
\end{array}
\right.
\end{equation*}
Therefore,
\begin{align}
\SJ_{d} = \E\,[\,&(q^{N(t)+1}-q^{N(t)})^2\,]\, (\lambda_{d}\delta
+ \mathcal{O}((\lambda_d\delta)^2)) \nonumber
\\ &+\sum_{j\ge 1}\E\,[\,(q^{N(t)+1+j}-q^{N(t)})^2\,]\,\mathcal{O}((\lambda_d\delta)^{j+1}) \ .
\label{eq:ss}
\end{align}
Note now that:
\begin{align*}
\E\,[\,(q^{N(t)+1+j}-q^{N(t)})^2\,] &\le \Bigl(\,
\sum_{k=1}^{j+1}| q^{N(t)+k} -q^{N(t)+k-1}|_{L_2}\, \Bigr)^2 \\
&=(j+1)^2\,\E\,[\,(q^{n+1}-q^{n})^2\,] \ ,
\end{align*}
since we have assumed stationarity.
From (\ref{eq:cost}):
\begin{equation*}
\lambda_d = d^{-5/4}\,\frac{l}{T\,C_{LF}} +
\mathcal{O}(d^{-6/4})\ .
\end{equation*}
and, from Proposition \ref{th:desplaz}, $\E\,[\,(q^{n+1}-q^{n})^2\,] =
\mathcal{O}(1)$. Therefore,
\begin{equation*}
d^{5/4}\times \sum_{j\ge
1}\E\,[\,(q^{N(t)+1+j}-q^{N(t)})^2\,]\,\mathcal{O}((\lambda_d\delta)^{j+1})
\end{equation*}
is of the same order in $d$ as
\begin{equation*}
\lambda_d^2\cdot d^{5/4}\times \sum_{j\ge 1}(j+1)^2\;
\mathcal{O}(\lambda_d^{j-1}) \ ,
\end{equation*}
thus:
\begin{equation*}
d^{5/4}\times \sum_{j\ge
1}\E\,[\,(q^{N(t)+1+j}-q^{N(t)})^2\,]\,\mathcal{O}((\lambda_d\delta)^{j+1})
= \mathcal{O}(\lambda_d) \ .
\end{equation*}
Using this result, and continuing from (\ref{eq:ss}), 
Proposition \ref{th:desplaz}  provides the required statement.
\end{proof}

\section{Conclusions}
\label{sec:conclusion} The HMC methodology provides
a promising framework for the study of a number of
sampling problems, especially in high dimensions.
There are a number of directions
in which the research direction taken in this
paper could be developed further. We list some of them.

\begin{itemize}

\item The overall optimization involves tuning  \emph{three} free
parameters $(l,T,M)$, and since M is a matrix, the number of
parameters to be optimized over, is even more in general.
In this paper, we have fixed $M$  and
$T$, and  focussed on optimizing the HMC algorithm
over choice of step-size $h$. The natural next step would be 
to study the algorithm for various  choices of the mass matrix ${M}$ 
and the integration time $T$.

\item We have concentrated on explicit integration by
the leapfrog method. For measures which have density
with respect to a Gaussian measure (in the limit $d \to
\infty$) it may be of interest to use semi-implicit
integrators. This idea has been developed for
the MALA algorithm (see \cite{Besk:Stua:09} and the
references therein) and could also be developed for
HMC methods. It has the potential of leading to
methods which explore state space in $\mathcal{O}(1)$
steps. 

\item The issue of irreducibility for the transition kernel
of HMC is subtle, and requires further investigation,
as certain exceptional cases can lead to nonergodic
behaviour
(see \cite{Canc:Stol:07,Schu:98} and the
references therein).

\item There is evidence that the limiting properties of MALA for 
high-dimensional target densities 
do not appear to depend critically on the tail behaviour of the target 
(see \cite{robe:98}). However in the present paper for HMC, we have 
considered densities that are no lighter than 
Gaussian at inÞnity.  It would thus be interesting to extend the work to light-tailed 
densities. This  links naturally to the question of using variable step size integration for 
HMC since light tailed densities will lead to superlinear vector fields at
 infinity in \eqref{eq:ham1}.

\item There is interesting recent computational 
work \cite{giro:09} concerning exploration of state space 
by means of nonseparable Hamiltonian dynamics; this work 
opens up several theoretical research directions.

\item We have shown how to scale
the HMC method to obtain $\mathcal{O}(1)$ acceptance probabilities
as the dimension of the target product measure grows. We
have also shown how to minimize a reasonable measure of
computational cost, defined as the work needed to make
an $\mathcal{O}(1)$ move in state space. However, in contrast
to similar work for RWM and MALA (\cite{robe:97, robe:98}) we have not
completely identified the limiting Markov process
which arises in the infinite dimensional limit. This
remains an interesting and technically demanding challenge.

\end{itemize}

\section*{Acknowledgements}
We thank Sebastian Reich for drawing our attention to the 
paper \cite{gupt:90} which
sparked our initial interest in the scaling issue for HMC. 
Further thanks also to Gabriel Stoltz for 
stimulating discussions.
Part of this paper was written when
NSP was visiting JMS at the University of Valladolid and we thank this institution
for its warm hospitality.
\bibliographystyle{amsplain}
\bibliography{references}

\end{document}